\theoremstyle{remark}
\newtheorem{remark}{Remark}[section]
\theoremstyle{definition}
\newtheorem{theorem}{Theorem}[section]
\newtheorem{definition}[theorem]{Definition}
\newtheorem{proposition}[theorem]{Proposition}
\newtheorem{lemma}[theorem]{Lemma}
\DeclareMathOperator{\R}{\mathbb{R}}
\DeclareMathOperator{\C}{\mathcal{C}}
\DeclareMathOperator{\N}{\mathbb{N}}
\DeclareMathOperator{\ra}{\rightarrow}
\DeclareMathOperator{\de}{\text{d}}
\newcommand{\sym}{\text{sym}}
\newcommand{\f}[1]{{\pmb{ #1}}}
\DeclareMathOperator{\di}{\nabla\cdot}
\newcommand{\tu}{\tilde{\f u}}
\newcommand{\ov}[1]{\overline{{#1}}}
\renewcommand{\t}{\partial_t}
\DeclareMathOperator{\argmin}{arg\,min}
\newcommand{\vv}{\tilde{\f v}}
\DeclareMathOperator{\V}{\f H^1_{0,\sigma}}
\DeclareMathOperator{\Ha}{\f L^2_{\sigma}}
\DeclareMathOperator{\spa}{span}
\title{Maximal dissipative solutions for incompressible fluid dynamics}
\date{December 20, 2019 (revision: September 10, 2020)}
\begin{document}
\pagefootright{Berlin, December 20, 2019/rev. September 10, 2020}

\author[R. Lasarzik]{
Robert Lasarzik\nofnmark\footnote{Weierstrass Institute \\
Mohrenstr. 39 \\ 10117 Berlin \\ Germany \\
E-Mail: robert.lasarzik@wias-berlin.de}}
\nopreprint{2666}	
\nopreyear{2019}	
\selectlanguage{english}		
\subjclass[2010]{35D99, 35Q30, 35Q31, 76D05, 76N10}	
\keywords{existence,
uniqueness,  {Navier--Stokes}, {E}uler, incompressible, fluid dynamics, dissipative solutions%
}
\maketitle
\begin{abstract}
We introduce the new concept of maximal dissipative solutions 
for a  general class of  isothermal GENERIC systems. Under certain assumption, we show that maximal dissipative solutions are well posed as long as the  bigger class of dissipative solutions is non-empty. Applying this result to the Navier--Stokes and Euler equations, we infer global well-posedness of maximal dissipative solutions for these systems. 
The concept of maximal dissipative solutions coincides with the concept of weak solutions as long as the weak solutions inherits enough regularity to be unique. 
\end{abstract}

\tableofcontents
\section{Introduction}
Nonlinear partial differential equations  require generalized solution concepts, mainly because smooth solutions do not exist in general (see~\cite[Sec.~11.3.2]{evans}).

 Leray introduced in his seminal work~\cite{leray}, the concept of weak solutions to the Navier--Stokes equations, which is nowadays widely accepted and used for numerous different problems. 
 Often, they still lac uniqueness due to insufficient regularity properties. 
In two spatial dimension, the weak solutions are known to be unique.  
For higher space dimensions, this is not known. 
Probably the most well-known uniqueness  result is due to Serrin~\cite{serrin} (see Remark~\ref{weakvs}).

 Beside weak solutions, there is a plethora of different solutions concepts for different problems. They range from measure-valued, statistical, over viscosity to different dissipative solution concepts. 
These solution concepts have different properties, advantages, and disadvantages, but so far do not allow to show existence and uniqueness for the Navier--Stokes and Euler equations. 
The overall goal may be formulated as finding a solution concept that generalizes classical solutions and complies with Hadamard's definition of well-posedness. This states that a solution to a differential equation should  exist, be unique, and depend continuously on the given data. 
With the  article at hand, we want to propose a step in this direction. 
We follow the line of our previous work on dissipative solutions~\cite{diss} and define the concept of maximal dissipative solutions.  As we will show, maximal dissipative solutions can be shown to exist in any space dimension and be unique by construction. Additionally, we show that the solution depends continuously (in certain typologies) on the given initial value and right-hand side.
Thus, this article gives an affirmative answer on the well-posedness of the Navier--Stokes and Euler equations in any space dimension in the sense of maximal dissipative solutions. In contrast, this is not a positive answer to the well-known Millenium problem~\cite{mill} since it does not deal with weak solutions.

The idea behind a dissipative solution is that the equations do not have to be fulfilled in some distributional sense anymore, but the distance of the solution to smooth test functions fulfilling the equation only approximately  is measured in terms of the relative energy and relative dissipation (to be made precise later on). 
The concept of dissipative solutions was first introduced by Pi\`erre-Louis Lions in the context of the Euler equations~\cite[Sec.~4.4]{lionsfluid} with ideas originating from singular limits in the Boltzmann equation~\cite{LionsBoltzman}. 
It is also applied in the context of incompressible viscous
electro-magneto-hydro\-dy\-na\-mics~\cite{raymond}
 and equations of viscoelastic diffusion in polymers~\cite{viscoelsticdiff}.
For the more involved Ericksen--Leslie system or nematic electrolytes, it was found that the dissipative solution concept, in comparison to measure-valued solutions, captures the quantity of interest (see~\cite{masswertig} and~\cite{diss}) and  is also more amenable from the point of view of a Galerkin~\cite{approx} or finite-element approximation~\cite{nematicelectro}.

Since this concept proved worthwhile for more difficult systems, it may also be  a good solution concept for simpler systems such as the Navier--Stokes equations. 
A problem arises, since dissipative solutions are not unique, even though they enjoy the weak-strong uniqueness property: They coincide with a local strong solution, as long as the latter exists. 
Thus, naturally the question arises, whether it is possible to design an additional criterion in order to choose a special solution from these many different dissipative solutions in order to may gain uniqueness of the solution.

We propose a step into this direction by introducing the concept of maximal dissipative solutions.  Following Dafermos~\cite{dafermosscalar}, we want to choose the solution dissipating the most energy.  
Therefore, we select the dissipative solution that minimizes the energy.  Similar ideas are also used in~\cite{maximaldiss} or~\cite{maxbreit}.

As in the dissipative solution framework, maximal dissipative solutions are not known to fulfill the equation in distributional sense. 
But since all equations are modeled starting from energies and dissipation mechanisms, clinging to the equation may not simplify the analysis. 
Additionally, recent approaches showed that weak solution may not be physically relevant, if they exceed certain regularity assumption. 
For a given energy profile, it is known that there exist infinitely many weak solutions to the Euler equations~\cite{Isett} and to the Navier--Stokes equations~\cite{buckmaster}. Therefore, these solution concepts may not be the appropriate ones.
 Thus the time seems to be ready to consider alternative solution concepts. 
 One key idea for the proposed solution concept is that the solutions are compared via the relative energy to test functions with enough regularity to be physically meaningful as a solution, \textit{i.e.,}  exhibit no non-physical non-uniqueness.
The maximal dissipative solutions  only coincide with weak solutions, as long as the weak solution is unique.


In his seminal paper, Leray~\cite{leray} observed that a physically relevant solution to the Navier--Stokes equation only needs the energy and the dissipation to be bounded. The disadvantage of the concept of weak solution is that this does not suffice for the weak sequential compactness of the formulation. In contrast, this is  the case for the proposed concept of maximal dissipative solutions, \textit{i.e.}, it is weak sequentially stable with respect to the weak compactness properties read of the energy inequality. 
The solution concept of maximal dissipative solution has the additional advantage, that it is written as the minimizer of a convex functional. This allows to use standard methods from the calculus of variations for the existence proof~(see the proof of Theorem~\ref{thm:gen} below) and minimizers of functionals often exhibit additional regularity, or are more amenable for regularity estimates~(see~\cite{gil}). As it is the case for the Ericksen--Leslie equations, we hope that the new concept of maximal dissipative solutions may also inspire stable numerical schemes for the Navier--Stokes or Euler equations. Especially since the idea of maximal dissipation provides  a descent selection criterion for the approximation of turbulent flows. 


The proposed solution concept is very general and may be applied to various kinds of problems, we want to introduce  the concept here for a general system, but the main idea is to apply it to  the Navier--Stokes and Euler equations. 
But it can be applied in the sense of Definition~\ref{def:maxdiss} (below) to other systems featuring the relative energy inequality like systems in complex fluids like nematic liquid crystals~\cite{sabine}, models in phase transition~\cite{lrs} or~\cite{nsch}, or more generally GENERIC systems~\cite{generic}. 

Plan of the paper: 
First, we introduce the concept of  maximal dissipative solutions for a general class of dissipative isothermal systems (see~\eqref{eqgen} below) and collect some preliminary material. 
Supposing the existence of dissipative solutions, we prove the well-posedness of maximal dissipative solutions under some general assumptions. 
In Section~\ref{sec:nav}, we show that weak solutions to the Navier--Stokes equations are indeed dissipative solutions and apply the general result to infer well-posedness of maximal dissipative solutions to the Navier--Stokes equations. 
In Section~\ref{sec:eul}, we apply the general result of Theorem~\ref{thm:gen} to the Euler equations to infer well-posedness and compare this concept to measure-valued solutions. 

\section{Dissipative solutions\label{sec:gen}}
This section is devoted to a general approach to dissipative solutions. We consider a system, which can be seen as an isothermal GENERIC system~\cite{generic} and demonstrate the general scheme of dissipative solutions. 
\subsection{Relative energy inequality for general isothermal GENERIC systems}
We suppose the following  
\begin{itemize}
\item[\textbf{(A1)}]
We consider an energy functional $\mathcal{E}: \mathbb{V} \ra \R_+$ defined on a Banach space, which is assumed to be convex, coercive, and twice Gateaux differentiable with
\begin{align*}
D \mathcal{E} : \mathbb{V} \ra \mathbb{V}^* \quad \text{and} \quad D^2 \mathcal{E}: \mathbb{V } \ra  L( \mathbb{V}, \mathbb{V}^*)\,,
\end{align*}
where $L$ denotes the space of linear operators.

We suppose that $\f K : \mathcal{D}(\f K ) \subset \mathbb V^* \ra \mathbb{V}^{**}$ is a monotone, radial continuous, coercive operator, $ \f L : \mathcal{D}(\f L ) \subset \mathbb V \ra L( \mathbb V^*, \mathbb{V}^{**})$  denotes a skew-symmetric operator with $\f L(\f u) ^* = - \f L(\f u)$ such that $ \langle D\mathcal{E}(\f u) , \f L(\f u) D\mathcal{E}(\f u )\rangle = 0 $ for all $\f u \in \mathbb V$, and $\f f\in \mathbb{V}^{**} $ denotes a right-hand side. 
Note that $\f K$ and $\f L$ are possibly only densely defined, where $ \mathcal{D}$ denotes the domain.

\end{itemize}
 
We consider the following class of evolutionary problems
\begin{align}
\t \f u + \f K (D \mathcal{E} (\f u) ) = \f L(\f u) D\mathcal{E}(\f u) + \f f \,,\label{eqgen}
\end{align}

The considered system fulfills an energy dissipation mechanism, \textit{i.e.}, formally testing~\eqref{eqgen} by $D \mathcal{E}(\f u)$ provides
\begin{align}
\mathcal{E}(\f u (t) ) + \int_0^t \langle\f K (D \mathcal{E} (\f u) ) , D \mathcal{E}(\f u) \rangle   \de s \leq \mathcal{E}(\f u_0)+ \int_0^t \langle \f f , D \mathcal{E}(\f u) \rangle \de s  \quad \text{for a.e. }t\in (0,T)\,.\label{genen}
\end{align}
%
%
%
%
%
To abbreviate, we define the set of sufficiently regular function $\mathbb Y $ to be given by $\mathbb Y:= \mathcal{D}(\f K )  \cap \mathcal{D}(\f L ) $.


The formulation of  dissipative solutions follows a simple idea. Instead of formulating the equation~\eqref{eqgen} in a generalized way, we want to take the variation of the energy dissipation principle with respect to arbitrary functions, which do not have to be solutions of~\eqref{eqgen}. 
The energy-dissipation principle may be seen as the more important structure of the system. This means, that the equation is only formally derived from physical principles like the energy dissipation mechanism by following different approaches and often assuming a certain regularity on the hypothetical solution. It has been frequently observed (see~\cite{buckmaster}), that when this regularity is not present, the equation may describe something unphysical. 
Therefore sticking to the energy dissipation principle seems to be reasonable. 
We define the relative energy $\R : \mathbb{V}\times \mathbb{V} \ra \R_+$ and the relative dissipation $\mathcal{W}: \mathbb{V}\times \mathbb{V}\ra \R_+$ as variation of the energy and dissipation, respectively,  to be given by 
\begin{align*}
\mathcal{R}(\f u|\tu) ={}& \mathcal{E}(\f u) - \mathcal{E}(\tu) - \left \langle D  \mathcal{E}(\tu), \f u - \tu\right \rangle \quad \intertext{and} \quad \mathcal{W}(\f u|\tu) ={}& \frac{1}{2}\left \langle \f K ( D \mathcal{E}(\f u) ) - \f K ( D \mathcal{E}(\tu)), D \mathcal{E}(\f u)  - D \mathcal{E}(\tu) \right \rangle \,,
\end{align*}
respectively.
Since $\mathcal{E}$ is convex, $\mathcal R$ is nonnegative for all $\f u$, $\tu \in \mathbb{V}$ and since $\mathcal{E}$ is weakly lower semi-continuous and Gateaux differentiable on $\mathbb V$, the same holds for the mapping $\f u \mapsto \mathcal{R}(\f u | \tu)$ for all $\tu\in \mathbb{V}$. 
Similar, the monotony of the operator $ \f K$ guarantees that $\mathcal{W}$ is nonnegative for all $\f u$, $\tu \in \mathbb{V}$. 
\begin{remark}
In case that the dissipation operator $\f K$ is a potential operator and stems form a convex, lower semi-continuous potential $\Psi$, we may use standard convex analysis (see~\cite{ekeland}) to rewrite $\mathcal{W}$. 
Indeed, for a potential operator $ \f K$, we observe that 
\begin{align*}
\langle \f K( \f w ) , \f w) = \Psi(\f w) + \Psi^*  ( \f K(\f w)) \,,
\end{align*}
where $^*$ denotes the convex conjugate. 
Then $\mathcal{W}$ could be interpreted as the first Taylor approximation of 
$\Psi$ and $\Psi^*$,
\begin{align*}
\mathcal{W}( \f u | \tu) ={}& \Psi( D\mathcal{E}(\f u)) + \Psi^*(\f K(D \mathcal{E}(\f u))) -\left (\Psi( D\mathcal{E}(\tu)) + \Psi^*(\f K(D \mathcal{E}(\tu))) \right ) \\&- \left \langle \f K( D\mathcal{E}(\tu)), D\mathcal{E}(\f u) - D \mathcal{E}(\tu) \right \rangle - \left \langle D\mathcal{E}(\tu) , \f K( D\mathcal{E}(\f u) ) - \f K(D \mathcal{E}(\tu)) \right \rangle 
\\
={}& \Psi( D\mathcal{E}(\f u)) -\Psi( D\mathcal{E}(\tu))  - \left \langle D \Psi ( D\mathcal{E}(\tu)), D\mathcal{E}(\f u) - D \mathcal{E}(\tu) \right \rangle \\&
 + \Psi^*(\f K(D \mathcal{E}(\f u))) - \Psi^*(\f K(D \mathcal{E}(\tu)))  - \left \langle D\Psi^* ( \f K(D \mathcal{E}(\tu)))  , \f K( D\mathcal{E}(\f u) ) - \f K(D \mathcal{E}(\tu)) \right \rangle 
\,.
\end{align*}
Both lines are positive, as long as $\Psi$ and $\Psi^*$ are convex. The Gateaux derivative could easily be replaced by a subdifferential. Note that $ D\Psi^* ( \f K(\f w )) = \f w $, if $ \f K = D \Psi $. 

\end{remark}

We are now in the position to derive the relative energy inequality, at least formally.
Assume that $\f u $ is a sufficiently regular solution to~\eqref{eqgen} and $\tu$ be a general sufficiently smooth function.
We define the solution operator $\mathcal{A}: \mathbb{Y} \ra \mathbb V^{**}$ to be 
\begin{align*}
 \mathcal{A}(\tu ) = \t \tu + \f K( D \mathcal{E}(\tu)) - \f L( \tu) D \mathcal{E}(\tu) - \f f \,.
\end{align*}
Then it holds
\begin{align*}
\mathcal{R}( \f u | \tu) \Big |_0^t \leq{}& - \int_0^t \langle D\mathcal{E}(\f u ) , \f K ( D\mathcal{E}(\f u )) \rangle - \langle D\mathcal{E}(\tu ) , \f K ( D\mathcal{E}(\tu )) \rangle + \langle \mathcal{A}(\tu), D\mathcal{E}(\tu)\rangle \de s 
\\
&+\int_0^t \langle \f f , D\mathcal{E}(\f u) - D \mathcal{E}(\tu) \rangle \de s - \int_0^t \langle D^2\mathcal{E}(\tu) \t \tu , \f u - \tu \rangle + \langle D\mathcal{E}(\tu), \t \f u-\t \tu\rangle \de s 
\\
={}& 
- \int_0^t \langle D\mathcal{E}(\f u ) , \f K ( D\mathcal{E}(\f u )) \rangle - \langle D\mathcal{E}(\tu ) , \f K ( D\mathcal{E}(\tu )) \rangle+ \langle \mathcal{A}(\tu), D\mathcal{E}(\tu)\rangle \de s 
\\
&+\int_0^t \langle \f f , D\mathcal{E}(\f u) - D \mathcal{E}(\tu) \rangle \de s - \int_0^t \langle D\mathcal{E}(\tu), \t \f u-\t \tu\rangle+\langle \t \tu ,  D\mathcal{E}(\f u) - D \mathcal{E}(\tu) \rangle \de s 
\\
& + \int_0^t \langle \t \tu ,  D\mathcal{E}(\f u) - D \mathcal{E}(\tu) - D^2\mathcal{E}(\tu) (\f u - \tu )\rangle \de s 
\\={}& 
- \int_0^t \langle D\mathcal{E}(\f u ) , \f K ( D\mathcal{E}(\f u )) \rangle - \langle D\mathcal{E}(\tu ) , \f K ( D\mathcal{E}(\tu )) \rangle \de s
\\&  + \int_0^t  \langle D\mathcal{E}(\tu), \f K( D \mathcal{E}(\f u) ) - \f K( D \mathcal{E}(\tu)) -  \left ( \f L(\f u)  D\mathcal{E}(\f u)- \f L(\tu)  D\mathcal{E}(\tu)\right  )\rangle \de s 
\\ & + \int_0^t \langle \f K (D\mathcal{E}(\tu) ) - \f L(\tu)D\mathcal{E}(\tu)  ,D\mathcal{E}(\f u) - D \mathcal{E}(\tu)\rangle -\langle \mathcal{A}(\tu)  ,D\mathcal{E}(\f u) - D \mathcal{E}(\tu)  \rangle \de s 
\\&+ \int_0^t \langle \t \tu ,  D\mathcal{E}(\f u) - D \mathcal{E}(\tu) - D^2\mathcal{E}(\tu) (\f u - \tu )\rangle \de s 
\\={}& 
- \int_0^t \langle D\mathcal{E}(\f u ) - D \mathcal{E}(\tu)  , \f K ( D\mathcal{E}(\f u )) -   \f K ( D\mathcal{E}(\tu )) \rangle \de s 
\\&  - \int_0^t   \langle D\mathcal{E}(\tu),   \left ( \f L(\f u) - L(\tu)\right ) \left (   D\mathcal{E}(\f u)- \  D\mathcal{E}(\tu)\right  )\rangle+\langle \mathcal{A}(\tu)  , D\mathcal{E}(\f u)- \  D\mathcal{E}(\tu)\rangle  \de s\\&+ \int_0^t \langle \t \tu ,  D\mathcal{E}(\f u) - D \mathcal{E}(\tu) - D^2\mathcal{E}(\tu) (\f u - \tu )\rangle \de s 
 \,.
\end{align*}
The inequality in the previous calculation is due to the energy inequality~\eqref{genen}. Additionally, we added and subtracted the solution operator $\mathcal{A}(\tu) $ tested with $D\mathcal{E}(\tu)$ in the first step. The second step is just a reformulation, the third uses the fact that $\f u $ solves~\eqref{eqgen} and adding and subtracting the solution operator $\mathcal{A}(\tu) $ accordingly. The last step is again a rearrangement using the fact that $\f L$ is skew-symmetric. 

This is a typical way to calculate the relative energy, in order to estimate the right-hand side, we have to assume that 
\begin{itemize}
\item[\textbf{(A2)}]
There exists a form $\mathcal{K}: \mathbb{Y}\ra \R_+ $ such that the estimate 
\begin{multline}
  |\langle  \t \tu   ,D\mathcal{E}(\f u) - D\mathcal{E}(\tu)-  D^2\mathcal{E}(\tu) (\f u - \tu )\rangle |
 +|    \langle D\mathcal{E}(\tu),   \left ( \f L(\f u) - L(\tu)\right ) \left (   D\mathcal{E}(\f u)- \  D\mathcal{E}(\tu)\right  )\rangle | \\
\leq \mathcal{K}(\tu) \mathcal{R}(\f u | \tu) +\mathcal{W}(\f u | \tu)  
\label{condition}
\end{multline}
holds.
\end{itemize}
 We find
\begin{multline*}
\mathcal{R}(\f u( t) | \tu(t) ) + \int_0^t  \mathcal{W}(\f u | \tu )   + \left \langle \mathcal{A}(\tu) , D\mathcal{E}(\f u)- \  D\mathcal{E}(\tu) \right \rangle   \de s \\
\leq \mathcal{R}(\f u_0| \tu (0) ) + \int_0^t \mathcal{K}(\tu) \mathcal{R}(\f u | \tu) \de s \,.
\end{multline*}
The relative energy inequality is resulting from applying Gronwall's inequality
\begin{align*}
\mathcal{R}(\f u( t) | \tu(t) ) + \int_0^t \left ( \mathcal{W}(\f u|  \tu )   + \left \langle \mathcal{A}(\tu , D\mathcal{E}(\f u)- \  D\mathcal{E}(\tu) \right \rangle \right )  e^{\int_s^t\mathcal{K}(\tu) \de \tau } \de s \leq \mathcal{R}(\f u_0| \tu(0) ) e^{\int_0^t \mathcal{K}(\tu)\de s } \,.
\end{align*}

\begin{remark}
The presented calculations are only formal and should only demonstrate a general strategy how to derive a relative energy inequality for the considered class of equations. The considered assumptions could be generalized in several fashions. In a sense, the presented approach represents a generalization of the gradient flow approach to dissipative Hamiltonian systems and some of the generalizations in the gradient flow setting (see for instance~\cite{mielke}) could also be applied here. 

There is a lot of freedom, when formulating a relative energy for instance in the choice of $\mathcal{K}$ or $\mathcal{W}$. 
Concerning the choice of $\mathcal{K}$, this regularity criterion usually has to be sufficient to provide uniqueness of weak solutions. For Navier--Stokes, we define it according to Serrin's uniqueness criterion. But it may also be chosen differently, such that the emerging maximal dissipative solution differs and depends on the choice of $\mathcal{K}$. 
Also the choice of $\mathcal{W}$ has some freedom of choice. There it is desirable that $\mathcal{W}$ is weakly-lower semi-continuous and even convex. 

The condition~\eqref{condition} gives some condition on the continuity of $\f L$ with respect to the energy and dissipation. And some condition on the regularity of the energy. There are different formulations possible, depending on the considered case.

\end{remark}
A recurrent assumption on the energy $\mathcal{E}$ is given by
\begin{itemize}
\item[\textbf{(A3)}] There exists a space $\mathbb{Z} \supset \mathbb V$ such that for some constant $C>0$ it holds
\begin{align}
\| D \mathcal{E}(\f u) \|_{\mathbb{Z}} \leq C ( \mathcal{E}(\f u ) +1)  \qquad \text{for all }\f u \in \mathbb V\label{enest}\,.
\end{align}
\end{itemize}
Under Condition \textbf{(A1)}, for every right hand side  $\f f
\in L^1(0,T; \mathbb{Z}^*)$ one may deduce \textit{a priori} estimates from~\eqref{genen} with the inequality $ |\langle \f f , D \mathcal{E}(\f u) \rangle | \leq \| \f f \|_{\mathbb Z^*} \| D \mathcal{E}(\f u) \| _{\mathbb Z} $, inequality~\eqref{enest}, and Gronwall's Lemma. For $\mathcal{E} $ locally sufficiently regular and fulfilling Assumption \textbf{(A1)}, the inequality 
\begin{align*}
\| D\mathcal{E}(\f u) - D\mathcal{E}(\tu)-  D^2\mathcal{E}(\tu) (\f u - \tu )\|_{\mathbb Z } \leq c \mathcal{ R} (\f u | \tu) \,
\end{align*}
holds for every $ \f u \in \mathbb V $ and $\tu \in \mathbb Y$ (see~\cite[Sec.~4.3]{lrs}).
Note that this inequality is trivially fulfilled, if $\mathcal{E}$ is quadratic, since the left-hand side vanishes in this case. 


\subsection{Definitions and general result}
We may define the dissipative and maximal dissipative solution concept and prove a general well-posedness result for maximal dissipative solutions assuming that at least one dissipative solution exists.  
\begin{definition}[dissipative solution]\label{def:diss} 
A function $\f u$ is called a dissipative solution, if $ \f u \in \mathbb{X}$ and the relative energy inequality
\begin{multline}
\mathcal{R}(\f u( t) | \tu(t) ) + \int_0^t \left ( \mathcal{W}(\f u , \tu )   + \left \langle \mathcal{A}(\tu (t)) , D\mathcal{E}(\f u)- \  D\mathcal{E}(\tu) \right \rangle \right )  e^{\int_s^t\mathcal{K}(\tu) \de \tau } \de s \\
\leq \mathcal{R}(\f u_0|  \tu(0) ) e^{\int_0^t \mathcal{K}(\tu)\de s } \label{relen}
\end{multline}
holds for a.e.~$t\in (0,T)$ and for all $ \tu \in \C^1([0,T];\mathbb{Y})$. 
\end{definition} 
\begin{remark}[Regular dissipative solutions]\label{rem:disreg}
Dissipative solutions fulfill the so-called weak-strong uni\-que\-ness property. They coincide with a strong solution emanating from the same initial datum, as long as the latter exists. Indeed, let $\bar{\f u}$ be a strong solution. Then it can be inserted as a test function in~\eqref{relen}. Since $\mathcal{A}( \bar{\f u}) = 0$ and $\mathcal{R}(\f u_0 | \bar{\f u}(0)) =0$. the inequality~\eqref{relen} can only be satisfied for $\tu = \bar{\f u}$, if $\f u =\bar{\f u }$. 

Additionally, 
it holds that if there exists a regular dissipative solution, then this solution is a regular weak solution, \textit{i.e.}, a strong solution. 
Indeed, assume that the dissipative solution $\bar{\f u}$ is regular, \textit{i.e.}, $\bar{\f u} \in \mathcal{Y}$, then also $ \tilde{\f u} = \bar{\f u} + \alpha \f r \in \mathcal{Y}$ for $\f r \in \C^\infty_c(\Omega \times [0,T] ; \R^d)$ with $\alpha > 0 $ sufficiently small. 
Inserting $ \tilde{\f u} = \bar{\f u} + \alpha \f r $  into~\eqref{relen} for the dissipative solution $\f u = \bar{\f u}$ and dividing by $\alpha$, we end up with
\begin{align*}
o(\alpha) \leq \int_0^t    \left \langle \mathcal{A}(\bar{\f u})  , 
\f r 
\right \rangle e^{\int_s^t  \mathcal{K} (\bar{\f u}) \de \tau+ o(\alpha) } \de s  + o(\alpha) \,,
\end{align*}
where $o (\alpha ) \ra 0 $ for $\alpha \ra 0$,
since the only linear term in $\alpha$ occurs in the last term on the right-hand side of~\eqref{relen} and all other appearing terms are super-linear in $\alpha$. 
Passing to the limit $\alpha\ra 0$ implies that 
\begin{align*}
0 \leq  \int_0^t \left \langle \mathcal{A}(\bar{\f u}) , 
\f r\right \rangle \de s\,.
 \end{align*}
the above inequality is in fact an equality  (since $\f r$ was arbitrary) and hence, $\bar{\f u}$  fulfills a standard weak formulation.  
\end{remark}

\begin{definition}[maximal dissipative solution]\label{def:maxdiss}
A function $\f u$ is called a maximal dissipative solution, if $ \f u \in \mathbb{X}$   is the solution of the following optimization problem
\begin{align*}
\min_{\f u \in \mathbb{X}} \frac{1}{2} \int_0^T \mathcal{E}( \f u( t) )  \de t \quad \text{such that $\f u $ is a dissipative solution according to Definition~\ref{def:diss}. }
\end{align*}
\end{definition} 

In the following, we introduce certain assumptions under which we will prove a general theorem. These assumptions are not very general, we mainly have  the application to the Navier--Stokes and Euler equations in mind. Afterwards, we may comment on possible generalizations. 
\begin{itemize}
\item[\textbf{(A4)}] 
Let $\mathcal{E}: \mathbb V \ra \R$ be a quadratic energy such that $D\mathcal E \in L (\mathbb V , \mathbb V^* )$ is a linear operator.  
We assume that $\f K$ is of polynomial growth, \textit{i.e.}, there exists a Banach space $ \mathbb W \subset \mathbb V $ such that there exists $c_{\f K}$ and $C_{\f K}>0$ with  $$c_{\f K} \left (  \| \f w \| _{\mathbb{W}} ^p-1 \right ) \leq \langle \f K( \f w) , \f w\rangle \leq C_{\f K} \| \f w \|_{\mathbb W}^p+1 \,.$$ 
Additionally, the mapping $ \f u \mapsto \mathcal{W}(\f u | \tu)$ is convex and weakly-lower semi-continuous for every $\tu \in \mathbb Y$. 
\end{itemize}
We note that the mapping $ \f u \mapsto \mathcal{R}(\f u | \tu)$ is already known to be convex and weakly-lower semi-continuous for every $\tu \in \mathbb Y$ due to Assumption \textbf{(A1)}. The Assumption~\textbf{(A3)} is trivially fulfilled under the Assumption~\textbf{(A4)} for $\mathbb{Z}= \mathbb{V}^*$. 
From the energy estimate~\eqref{genen}, we observe that the natural state space is given by 
 $ \f u \in \mathbb X$, where  $ \f u \in \mathbb X$, if $  \f u \in L^\infty(0,T;\mathbb V)$ and  $D \mathcal{E}(\f u) \in  L^p(0,T;\mathbb W)$.  In the case of $\f K \equiv 0$, the natural state space is given by  $\mathbb{X}:= L^\infty(0,T;\mathbb V)$.
The associated space for the right-hand side is then given by  $ \f f \in L^{p'} (0,T;\mathbb{W}^*) \bigoplus L^1(0,T; \mathbb{Z}^*)$ for $p'=p/(p-1)$. 
For such a right-hand side, we may deduce \textit{a priori} estimates from the energy estimate~\eqref{genen}. Indeed, for $\f f \in L^{p'} (0,T;\mathbb{W}^*) \bigoplus L^1(0,T; \mathbb{Z}^*)$ there exists a 
$\f f_1 \in L^{p'} (0,T;\mathbb{W}^*) $ and a $\f f_2 \in L^1(0,T; \mathbb{Z}^*) $ such that $\f f = \f f _1 + \f f _2 $. 
This allows to estimate by Young's inequality and~\eqref{enest} for  $\mathbb{Z}= \mathbb{V}^*$.
\begin{align*}
\langle \f f , D \mathcal E (\f u ) \rangle = \langle \f f _1, D \mathcal E (\f u ) \rangle +\langle \f f _2, D \mathcal E (\f u ) \rangle \leq \frac{c_k}{2} \left \|  D \mathcal E (\f u ) \right \| _{\mathbb W} ^p + C \| \f f_1 \|_{\mathbb W^*} ^{p'} +C \| \f f_2 \|_{\mathbb Z^*}  \left ( 1 + \mathcal{E}( \f u ) \right ) \,,
\end{align*}
 which provides \textit{a priori} estimates in $\mathbb X$ when inserted into~\eqref{genen}.

The convergence $\f u^n  \stackrel{*}{\rightharpoonup} \f u $ in $\mathbb{X} $ means that there exists a $\f u \in \mathbb X$ such that
\begin{align*}
\f u^n \stackrel{*}{\rightharpoonup} \f u \quad \text{in } L^\infty(0,T;\mathbb V)  \qquad \text{and} \qquad D\mathcal{E}(\f u^n ) \rightharpoonup D\mathcal{E}(\f u) \quad \text{in } L^p(0,T;\mathbb W)\,.
\end{align*}
Due to the linearity of $D\mathcal E $, the first convergence implies the second one.

\begin{theorem}\label{thm:gen}
  Let $\mathcal{R}$, $\mathcal{W}$, $\mathcal{K}$, and $\mathcal{A}$ be given as above and let the assumption \textbf{(A1)}, \textbf{(A3)}, and \textbf{(A4)} be fulfilled.
Then the set of dissipative solutions   is closed and convex. In case that there exists a dissipative solution according to~Definition~\ref{def:diss} for any initial datum $\f u _0 \in \mathbb V$ and right-hand side $\f f \in  L^{p'} (0,T;\mathbb{W}^*) \bigoplus L^1(0,T; \mathbb{V}^*)$, then there exists a unique maximal dissipative solution $\f u \in \mathbb{X}$
  in the sense of Definition~\ref{def:maxdiss} and the maximal dissipative solution depends continuously on the initial datum and the right-hand side in the sense
$( \f u ^n_0 , \f f^n ) \ra ( \f u _0 , \f  f ) $ in $ \mathbb{V} \times ( L^{p'} (0,T;\mathbb{W}^*) \bigoplus L^1(0,T; \mathbb{Z}^*)) $, then  to every $n\in \N$ there exists a maximal dissipative solution $\f u^n \in \mathbb{X}$ to given values $(\f u_0^n , \f f ^n)$ and it holds 
$ \f u^n \stackrel{*}{\rightharpoonup} \f u $ in $\mathbb{X}$.
 \end{theorem}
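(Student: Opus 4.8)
\emph{Convexity and closedness of the set $S$ of dissipative solutions (for fixed data $\f u_0,\f f$).}
Since $\mathcal E$ is quadratic, $D\mathcal E$ is linear, so in~\eqref{relen} the only term depending on the solution $\f u$ that is not convex in $\f u$ is $\langle\mathcal A(\tu),D\mathcal E(\f u)-D\mathcal E(\tu)\rangle$, which is affine; the maps $\f u\mapsto\mathcal R(\f u|\tu)$ and $\f u\mapsto\mathcal W(\f u|\tu)$ are convex by \textbf{(A1)} and \textbf{(A4)}, the weights $e^{\int_s^t\mathcal K(\tu)\de\tau}$ are nonnegative, and the right-hand side of~\eqref{relen} is independent of $\f u$. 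Hence for $\f u_1,\f u_2\in S$ and $\lambda\in[0,1]$ the function $\lambda\f u_1+(1-\lambda)\f u_2$ satisfies~\eqref{relen} for every admissible $\tu$ and lies in the vector space $\mathbb X$, so $S$ is convex. For closedness, take $\f u^n\in S$ with $\f u^n\stackrel{*}{\rightharpoonup}\f u$ in $\mathbb X$; multiply~\eqref{relen} by an arbitrary $0\le\phi\in\C_c^\infty(0,T)$, integrate in $t$, and rewrite the double integral by Fubini as $\int_0^T\psi(s)(\dots)\de s$ with $\psi\ge 0$. The functionals $\f v\mapsto\int_0^T\phi(t)\mathcal R(\f v(t)|\tu(t))\de t$ and $\f v\mapsto\int_0^T\psi(s)\mathcal W(\f v(s)|\tu(s))\de s$ are convex and weakly-$*$ sequentially lower semicontinuous on $\mathbb X$, the term $\int_0^T\psi(s)\langle\mathcal A(\tu(s)),D\mathcal E(\f v(s))-D\mathcal E(\tu(s))\rangle\de s$ is affine in $D\mathcal E(\f v)$ hence weakly continuous, and the right-hand side is fixed; passing to the limit inferior gives~\eqref{relen} for $\f u$ tested against every such $\phi$, hence for a.e.\ $t$, so $\f u\in S$.

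\emph{Existence and uniqueness of the maximal dissipative solution.}
By hypothesis $S\neq\emptyset$, and by the previous step $S$ is closed and convex in $\mathbb X$. Inserting a fixed reference function into~\eqref{relen} (e.g.\ $\tu\equiv 0$, for which $\mathcal R(\f u|0)=\mathcal E(\f u)$ and $\mathcal W(\f u|0)\ge\tfrac12\langle\f K(D\mathcal E(\f u)),D\mathcal E(\f u)\rangle-\con$) and using \textbf{(A4)}, \textbf{(A3)}, Young's and Gronwall's inequalities together with the assumed integrability of $\f f$, one obtains a uniform \textit{a priori} bound; hence $S$ is a bounded subset of $\mathbb X$. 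The functional $J(\f u)=\tfrac12\int_0^T\mathcal E(\f u(t))\de t$ is convex and continuous (as $\mathcal E$ is quadratic), strictly convex on $L^2(0,T;\mathbb V)$ (as $\mathcal E$ is coercive), and weakly-$*$ sequentially lower semicontinuous on $\mathbb X$. The direct method --- a minimizing sequence in the bounded set $S$ admits a weakly-$*$ convergent subsequence in $\mathbb X$, whose limit lies in $S$ by closedness and realizes the infimum by weak lower semicontinuity of $J$ --- produces a minimizer, and it is unique by strict convexity of $J$.

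\emph{Continuous dependence.}
Let $(\f u_0^n,\f f^n)\ra(\f u_0,\f f)$ and let $\f u^n$ be the maximal dissipative solution for $(\f u_0^n,\f f^n)$ (it exists by the previous step, since dissipative solutions exist for all data). The \textit{a priori} bound, applied with the convergent, hence bounded, data, gives that $(\f u^n)$ is bounded in $\mathbb X$, so $\f u^{n_k}\stackrel{*}{\rightharpoonup}\f v$ in $\mathbb X$ along a subsequence. First, $\f v$ is a dissipative solution for $(\f u_0,\f f)$: the left-hand side of~\eqref{relen} is treated by weak lower semicontinuity as in the closedness step, while on the right-hand side $\mathcal R(\f u_0^{n_k}|\tu(0))\ra\mathcal R(\f u_0|\tu(0))$ by strong convergence of the initial data, and $\mathcal A(\tu)$ for data $\f f^{n_k}$ differs from $\mathcal A(\tu)$ for data $\f f$ only by $\f f-\f f^{n_k}$, which converges to $0$ strongly in $L^{p'}(0,T;\mathbb W^*)\bigoplus L^1(0,T;\mathbb Z^*)$ and therefore pairs with the (uniformly bounded, by \textbf{(A4)} and~\eqref{enest}) sequence $D\mathcal E(\f u^{n_k})$ to give a vanishing contribution, the remaining part converging by $D\mathcal E(\f u^{n_k})\rightharpoonup D\mathcal E(\f v)$. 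To identify $\f v$ with the maximal dissipative solution $\f u$ of $(\f u_0,\f f)$ --- the unique minimizer of $J$ over $S(\f u_0,\f f)$ --- it suffices to show $J(\f v)\le J(\f w)$ for every $\f w\in S(\f u_0,\f f)$. The plan is to build, for such $\f w$, a recovery sequence $\f w^n\in S(\f u_0^n,\f f^n)$ with $\f w^n\ra\f w$ strongly in $L^2(0,T;\mathbb V)$; then minimality of $\f u^n$ over $S(\f u_0^n,\f f^n)$ gives $\int_0^T\langle D\mathcal E(\f u^n),\f w^n-\f u^n\rangle\de t\ge 0$, and passing to the limit (strong $\f w^n$ against weak $D\mathcal E(\f u^n)$, together with $\int_0^T\langle D\mathcal E(\f u^n),\f u^n\rangle\de t=\int_0^T 2\mathcal E(\f u^n)\de t$ and weak lower semicontinuity of $J$) yields $4J(\f v)\le\int_0^T\langle D\mathcal E(\f v),\f w\rangle\de t$; combined with the time-integrated convexity inequality $\mathcal E(\f w)\ge\mathcal E(\f v)+\langle D\mathcal E(\f v),\f w-\f v\rangle$ this forces $J(\f v)\le J(\f w)$. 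Hence $\f v=\f u$, and since the limit is independent of the subsequence, $\f u^n\stackrel{*}{\rightharpoonup}\f u$ in $\mathbb X$.

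\emph{Main obstacle.}
The crux is the recovery sequence of the last step --- equivalently, the inner semicontinuity of the solution-set map $(\f u_0,\f f)\mapsto S(\f u_0,\f f)$. A dissipative solution $\f w$ of $(\f u_0,\f f)$ satisfies~\eqref{relen} for the perturbed data only up to an error $\Delta_n(\tu)$ which, since $\f f$ enters~\eqref{relen} affinely and $\f u_0$ only through the quadratic $\mathcal R(\cdot|\tu(0))$, tends to zero for each fixed $\tu$ but not uniformly in $\tu$, so $\f w$ itself is not admissible for $(\f u_0^n,\f f^n)$. One must correct it --- for instance by forming a convex combination with a dissipative solution of $(\f u_0^n,\f f^n)$ (available by hypothesis) and invoking the convexity of $S(\f u_0^n,\f f^n)$ established above, with the combination parameter tending to $0$ fast enough to absorb $\Delta_n$ --- and then verify that the correction vanishes strongly in $L^2(0,T;\mathbb V)$. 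Carrying this through, in particular securing strong rather than merely weak convergence of the recovery sequence, is where the essential work of the theorem is concentrated.
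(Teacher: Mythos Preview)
Your treatment of convexity, closedness, and existence/uniqueness of the minimizer (Steps 1--3) is essentially the same as the paper's, which also reformulates~\eqref{relen} via Lemma~\ref{lem:invar} to pass to the weak limit and then invokes strict convexity of $\int_0^T\mathcal E(\f u)\,\de t$ on the closed convex set.

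The genuine gap is in the continuous-dependence step, and you identify it yourself: you never build the recovery sequence $\f w^n\in S(\f u_0^n,\f f^n)$ with $\f w^n\to\f w$ strongly, and your suggested convex-combination construction would in general only give weak convergence of $\f w^n$, which is not enough to pass to the limit in $\int_0^T\langle D\mathcal E(\f u^n),\f w^n\rangle\,\de t$ (weak against weak). More importantly, your premise that the error $\Delta_n(\tu)$ ``tends to zero for each fixed $\tu$ but not uniformly in $\tu$'' is precisely the point on which the paper disagrees, and this is the key idea you are missing. The paper computes the difference of the constraint functional for the two data pairs at the \emph{same} $(\f u,\tu)$ (see~\eqref{inconrel}) and observes that, after multiplying by $e^{-\int_0^t\mathcal K(\tu)}$, it is controlled by
\[
\int_0^t\bigl|\langle \f f-\f f^n,\,D\mathcal E(\f u)-D\mathcal E(\tu)\rangle\bigr|\,e^{-\int_0^s\mathcal K(\tu)\,\de\tau}\,\de s
\;+\;\mathcal R(\f u_0\,|\,\f u_0^n)\,.
\]
Since $D\mathcal E(\f u)$ is bounded in $\mathbb X$ by the a~priori estimate and $\tu(s)\,e^{-\int_0^s\mathcal K(\tu)}$ stays bounded in $\mathbb X$ for $\tu\in\C^1([0,T];\mathbb Y)$ with bounded energy and dissipation, this right-hand side goes to zero \emph{uniformly} over the relevant class of test functions $\tu$. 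That uniformity is exactly the inner semicontinuity of $(\f u_0,\f f)\mapsto S(\f u_0,\f f)$ you were trying to manufacture by hand; once it is available, the convergence of minimizers follows from a standard stability result for parametrized convex minimization (the paper cites~\cite[Prop.~1.3.2]{RoubicekMeasure}), and no recovery sequence or variational-inequality argument is needed.
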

 
 \begin{remark}[Generalizations of assumptions]
 The assumption \textbf{(A4)} is very much tailored to the needs of the Navier--Stokes and Euler equations. Especially the energy is restricted to the quadratic case. 
This can be further generalized. If $D\mathcal E$ is not linear, several adaptations are possible. In case that $D\mathcal E $ is still invertible, a good state space is rather the one of $D\mathcal{E}$, \textit{i.e.}, $\mathbb W \subset \mathbb V^*$. Then, convexity and weakly lower semi-continuity have to be  assumed for the mappings $ \f w \mapsto \mathcal R( (D\mathcal E)^{-1}(\f w) , \tu ) $ and $   \f w \mapsto \mathcal W( (D\mathcal E)^{-1}(\f w) , \tu ) $ in order to deduce that the set of dissipative solutions is closed and convex. 
 It would also be possible to prove a general existence result via some Galerkin approximation in the natural state space, but we refrained from executing it here. Often existence of dissipative solutions is already known and a descent discretization depends heavily on the specific features of a considered system.

 The estimate in Assumption \textbf{(A2)} only holds for rather general energies fulfilling also estimate~\eqref{enest}. It is also possible to extend this approach to more irregular energies that are only proper and convex and do not fulfill~\eqref{enest}. We are going to address this issue in a subsequent publication. 
 \end{remark}

 Before we prove the above theorem, we provide a preliminary lemma. 
\begin{lemma}\label{lem:invar}
Let $f\in L^1(0,T)$ and $g\in L^\infty(0,T)$ with $g\geq 0$ a.e.~in $(0,T)$.
Then the two inequalities 
\begin{align*}
-\int_0^T \phi'(t) g(t) \de t - \phi(0) g(0) + \int_0^T \phi(t) f(t) \de t \leq 0 
\end{align*}
for all $ \phi\in\C^\infty_c([0,T)) ${ with } $\phi \geq 0 $, and $ \phi'\leq 0$ on $[0,T]$
and 
\begin{align}
g(t) -g(0) + \int_0^t f(s) \de s \leq 0 \quad \text{for a.e.~}t\in(0,T)\,\label{ineq2}
\end{align}
are equivalent.
\end{lemma}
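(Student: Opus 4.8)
The plan is to prove the two implications separately, the direction from the pointwise inequality \eqref{ineq2} to the variational inequality being the routine one, and the converse being the one requiring a careful argument with mollifiers and Lebesgue points.

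First I would assume \eqref{ineq2} holds and derive the variational inequality. Given $\phi\in\C^\infty_c([0,T))$ with $\phi\geq 0$ and $\phi'\leq 0$, I multiply \eqref{ineq2} by $-\phi'(t)\geq 0$ and integrate over $(0,T)$; this preserves the inequality since the multiplier is nonnegative. This yields
\begin{align*}
\int_0^T (-\phi'(t))\left( g(t)-g(0)+\int_0^t f(s)\,\de s\right)\de t \leq 0\,.
\end{align*}
Now I treat the three terms. The first term is $-\int_0^T \phi'(t) g(t)\,\de t$ directly. For the term $\int_0^T (-\phi'(t))(-g(0))\,\de t = g(0)\int_0^T \phi'(t)\,\de t = g(0)(\phi(T)-\phi(0)) = -\phi(0) g(0)$ since $\phi(T)=0$. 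For the last term, Fubini's theorem gives $\int_0^T(-\phi'(t))\int_0^t f(s)\,\de s\,\de t = \int_0^T f(s)\int_s^T (-\phi'(t))\,\de t\,\de s = \int_0^T f(s)\phi(s)\,\de s$, again using $\phi(T)=0$. Summing the three contributions reproduces exactly the variational inequality.

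For the converse, assume the variational inequality holds for all admissible $\phi$. Define $h(t):=g(t)-g(0)+\int_0^t f(s)\,\de s$, which lies in $L^1(0,T)$; the goal is $h\leq 0$ a.e. The idea is to choose, for a fixed Lebesgue point $t_0\in(0,T)$ of $h$, test functions $\phi_\varepsilon$ that approximate the indicator of $[0,t_0]$ from the admissible class — i.e. nonincreasing, smooth, compactly supported in $[0,T)$, with $\phi_\varepsilon\equiv 1$ on $[0,t_0]$ and $\phi_\varepsilon$ decreasing smoothly to $0$ on $[t_0,t_0+\varepsilon]$. Rewriting the variational inequality via the same integration-by-parts/Fubini identities as above in reverse, the inequality $-\int_0^T\phi'g - \phi(0)g(0) + \int_0^T\phi f \leq 0$ becomes $\int_0^T(-\phi_\varepsilon'(t)) h(t)\,\de t \leq 0$, i.e. $\frac{1}{\varepsilon}\int$-type averages of $h$ over $[t_0,t_0+\varepsilon]$ (against a fixed bump profile) are $\leq 0$ up to the normalization $\int(-\phi_\varepsilon')=\phi_\varepsilon(0)=1$. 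Passing $\varepsilon\to 0$ and using that $t_0$ is a Lebesgue point of $h$ yields $h(t_0)\leq 0$; since a.e. $t_0$ is a Lebesgue point, we conclude $h\leq 0$ a.e., which is \eqref{ineq2}. A minor point to handle is the endpoint: for $t_0$ near $T$ one shrinks $\varepsilon$ so that $t_0+\varepsilon<T$, keeping $\phi_\varepsilon$ admissible.

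The main obstacle is the converse direction, specifically making the approximation of the indicator rigorous while staying inside the admissible test-function class (smooth, compactly supported in $[0,T)$, nonnegative, nonincreasing) and simultaneously controlling the boundary term $\phi(0)g(0)$. The monotonicity constraint $\phi'\leq 0$ is exactly what forces the test functions to be of the "nonincreasing cutoff" type, so the key realization is that the identity $-\int\phi'g - \phi(0)g(0) + \int\phi f = \int(-\phi')h$ converts the whole statement into a statement about nonnegative measures $(-\phi')\,\de t$ of total mass $\phi(0)$ testing $h$, after which the Lebesgue differentiation theorem finishes the argument. The hypothesis $g\in L^\infty$ with $g\geq 0$ is used only to ensure $g(0)$ and the integrals are well-defined and the manipulations are legitimate; it plays no essential role beyond integrability.
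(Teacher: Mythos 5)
Your proposal is correct and follows essentially the same route as the paper's (much more terse) proof: the direction from~\eqref{ineq2} to the variational inequality by testing with $-\phi'\geq 0$ and rearranging via Fubini/integration by parts, and the converse by testing with smooth nonincreasing approximations of the indicator $\chi_{[0,t_0]}$ normalized by $\phi_\varepsilon(0)=1$ and concluding at Lebesgue points. Your write-up simply supplies the details the paper leaves as a sketch.
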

\begin{proof}
The proof of the first implication is a simple adaptation of the classical variational lemma (compare to~\cite{weakstrong}).
It can be seen, by choosing a sequence $\{ \phi_\varepsilon\}$ as a suitable (monotone decreasing) approximation  of the indicator function on $[0,t]$, \textit{i.e.}, $\chi_{[0,t]}$, with the condition $\phi_\varepsilon(0)=1$. 

The reverse implication can be seen, by testing~\eqref{ineq2} by $- \phi'$ and integrate-by-parts in the last two terms. 

\end{proof}

\begin{remark}[Reformulation of the optimization problem]
The minimization problem of Definition~\eqref{def:maxdiss} can be seen as a optimization problem with infinitely many inequality constraints. Therefore, it can be rewritten (using Lemma~\ref{lem:invar}) in the following way
\begin{align*}
 \f u =\argmin_{\f u\in \mathbb {X}} \left ( \int_0^T \mathcal{E}(\f u (t) ) \de t +  \mathcal{F}(\f u )\right )  \,,
 \end{align*}
 where
\begin{multline*}
\mathcal{F}(\f v) = {}
%
 \sup _{\phi\in \tilde{\C} (0,T)}\sup _{\tu\in \C^1([0,T];\mathbb{Y})}\Big( -\int_0^T \phi'(t) \mathcal{R}(\f u(t)|\tu(t)) e^{-\int_0^t\mathcal{K}(\tu(s))\de s }  \de t - \phi(0) \mathcal{R}(\f u _0,\tu (0))  \\{}+ \int_0^T \phi(t) \left ( \mathcal{W}(\f u(t) | \tu (t)) + \left \langle \mathcal{A}(\tu (t)) ,D \mathcal E(\f u (t))-D \mathcal E(\tu(t))  \right \rangle \right )e^{-\int_0^t\mathcal{K}(\tu (s))\de s }    \de t\Big)\,,
\end{multline*}
where $\phi \in \tilde{\C} ([0,T])$ as long as  $ \phi\in\C^1([0,T]) ${ with } $\phi \geq 0 $, and $ \phi'\leq 0$ on $[0,T]$ as well as $\phi(0)=1$ and $\phi(T)=0$. 

Therefore, inequality~\eqref{relen} is multiplied by $e^{-\int_0^t\mathcal{K}(\tu) \de \tau}$, the supremum is taken over the side conditions, \textit{i.e.}, all $\tu \in \mathbb {Y}$. Finally, the supremum is taken over the Lagrange multiplier $\phi$. 
\end{remark}

\begin{proof}[Proof of Theorem~\ref{thm:gen}]
The proof is divided in several steps: \textit{Step 1, Solution set is closed:}
Let $\{ \f u^n  \} $ be sequence of dissipative solutions according to Definition~\ref{def:diss}. Since the relative energy inequality~\eqref{relen} also holds for $\tu = 0$, we may infer the energy inequality~\eqref{eninv}.   This leads to \textit{a priori} estimates and let us deduce the standard weak convergence
\begin{align}
\f u^ n \stackrel{*}{\rightharpoonup} \f u \,,\quad \text{in } \mathbb{X} \,.\label{convergence}
\end{align}

Considering the 
relative energy inequality~\eqref{relen} for $\f u^n$ and  fixed $\tu$, we 
multiply it by $e^{-\int_0^t\mathcal{K}(\tu) \de \tau}$ and use Lemma~\ref{lem:invar} to infer
\begin{align}
\begin{split}
-&\int_0^T \phi'(t) \mathcal{R}(\f u^n(t)|\tu (t)) e^{-\int_0^t\mathcal{K}(\tu (s))\de s }  \de t  -\phi(0) \mathcal{R}(\f u^n _0,\tu (0))\\&{}+ \int_0^T \phi(t) \Big ( \mathcal{W}(\f u^n(t) | \tu (t)) + \left \langle \mathcal{A}(\tu (t)) ,D\mathcal{E}(\f u^n(t))-D \mathcal{E}(\tu(t))  \right \rangle \Big )e^{-\int_0^t\mathcal{K}(\tu (s))\de s }    \de t\leq  0 \,. \end{split}
\label{relenweak}
\end{align}
for all  $\phi \in \tilde{\C} ([0,T])$. 

In this formulation, we may pass to the limit since 
 $\mathcal{R}(\cdot | \tu )$ and $\mathcal{W}(\cdot | \tu )$ are weakly lower semi-con\-tin\-uous with respect to the convergence~\eqref{convergence} and $D\mathcal{E}(\f u^n) $ occurs linear multiplied with the solution operator $\mathcal{A}(\tu )$ such that weak convergence is sufficient to pass to the limit in this term. The inequality~\eqref{relenweak} also holds in the limit for every $\vv\in \C^1([0,T]; \mathbb Y) $ such that we deduce from Lemma~\ref{lem:invar} that~\eqref{relen} also holds for the limit~$\f u$ and hence, $\f u$ fulfills the Definition~\ref{def:diss}.

\textit{Step 2: Convexity of the solution set:}
The convexity of the solution set  follows again from the fact that the relative energy inequality is convex in $\f v$.  
Indeed, let $\f v_1$ and $\f v_2$ be two solutions in the sense of Definition~\ref{def:diss}. 
Since $\mathcal{R}(\cdot | \vv )$ and $\mathcal{W}(\cdot | \vv)$ are convex for fixed $\vv\in \mathbb Y $, we may deduce
\begin{multline*}
\mathcal{R}(\lambda\f v_1 + (1-\lambda)\f v_2 | \vv ) \\+ \int_0^t \left ( \mathcal{W}(\lambda\f v_1 + (1-\lambda)\f v_2 , \vv )   + \left \langle \mathcal{A}(\vv ) ,D \mathcal{E}(\lambda\f v_1 + (1-\lambda)\f v_2)-D \mathcal{E}(\vv) \right \rangle \right )  e^{\int_s^t\mathcal{K}(\vv) \de \tau } \de s\\ 
\leq 
\lambda \left ( \mathcal{R}(\f v_1 | \vv ) + \int_0^t \left ( \mathcal{W}(\f v_1 , \vv )   + \left \langle \mathcal{A}(\vv ) ,D \mathcal{E}(\f v_1 )-D \mathcal{E}(\vv)  \right \rangle \right )  e^{\int_s^t\mathcal{K}(\vv) \de \tau } \de s \right )\\ 
+ (1-\lambda) \left ( \mathcal{R}(\f v_2 | \vv ) + \int_0^t \left ( \mathcal{W}(\f v_2 , \vv )   + \left \langle \mathcal{A}(\vv ) ,D \mathcal{E}(\f v_2)-D \mathcal{E}(\vv) \right \rangle \right )  e^{\int_s^t\mathcal{K}(\vv) \de \tau } \de s\right ) \\
 \leq \left ( \lambda + (1-\lambda ) \right  ) \mathcal{R}(\f v_0| \vv(0) ) e^{\int_0^t \mathcal{K}(\vv)\de s }
\end{multline*}
for all $\lambda\in [0,1]$, since $\f v_1 $ and $\f v_2$ are assumed to be dissipative solutions and $D\mathcal{E}$ is a linear map.   
This implies that $\lambda\f v_1 + (1-\lambda)\f v_2$ is also a  dissipative solution. 

 \textit{Step 3, Well posedness:} 
 First, we have to check whether the solution concept is well-posed. To this end, we observe that the solution set of dissipative solutions is closed and convex according to the previous two steps. 
 Since the functional $\f u \mapsto  \frac{1}{2} \int_0^T \| \f u \|_{L^2(\Omega)}^2 \de t $ is a strictly convex lower semi-continuous functional defined on this closed and convex solution set and thus, has a unique minimizer~\cite[Prop.~1.3.1]{RoubicekMeasure}
 
 \textit{Step 4, Continuous dependence: }
 Then, we consider the perturbed problem, where $\f u_0$ and $ \f f$ are replaced by $\f u_0^n $ and $\f f ^n$, respectively. 
 We assume that the initial values and right-hand sides converge $( \f u ^n_0 , \f f^n ) \ra ( \f u_0 , \f  f ) $ in $ \mathbb{V} \times   L^{p'} (0,T;\mathbb{W}^*) \bigoplus L^1(0,T; \mathbb{Z}^*)$.
First, we observe that to every pair $  ( \f u ^n_0 , \f f^n ) \in \mathbb{V} \times  L^{p'} (0,T;\mathbb{W}^*) \bigoplus L^1(0,T; \mathbb{Z}^*) $ there exists a unique maximal dissipative solution $\f u^n$. 
 Then, we may prove the continuity of the relative energy inequality. 
  Indeed, considering the difference of the relative energy inequality~\eqref{relen} for fixed $\f u$ and  $\tu $  multiplied by $e^{-\int_0^t\mathcal{K}(\tu )\de s }$ for two different pairs  $( \f u ^n_0 , \f f^n )$ and $( \f u _0 , \f  f )$, we find
  \begin{multline}
    \Big | \mathcal{R}( \f u | \tu )e^{-\int_0^t \mathcal{K}(\tu) \de s } + \int_0^t \left ( \mathcal{W}( \f u | \tu ) + \left \langle \mathcal{A}_{\f f}(\tu) , D \mathcal{E}(\f u) -D \mathcal{E}( \tu) \right \rangle \right )e^{-\int_0^s \mathcal{K}(\tu )\de \tau } \de s  \\
   -\left (  \mathcal{R}( \f u | \tu ) e^{-\int_0^t \mathcal{K}(\tu) \de s } + \int_0^t \left ( \mathcal{W}( \f u | \tu ) + \left \langle \mathcal{A}_{\f f^n}(\tu) , D \mathcal{E}(\f u) - D \mathcal{E}(\tu) \right \rangle \right )e^{-\int_0^s \mathcal{K}(\tu )\de \tau } \de s\right ) \Big | \\
+|\mathcal{R}( \f u_0 | \tu (0)     - \mathcal{R}( \f u^n_0 | \tu(0)| 
   \\
   \leq \int_0^t \left | \left \langle \f f(s) - \f f^n(s) , D \mathcal{E}(\f u(s)) - D \mathcal{E}(\tu(s)) \right \rangle \right | e^{-\int_0^s \mathcal{K}(\tu)\de \tau } \de s+ \mathcal{R}( \f u_0| \f u_0^n ) \,.
     \label{inconrel}
 \end{multline}
 We observe that $ \f u$ is bounded in $ \mathbb{X}$ due to the energy estimates and $e^{-\int_0^s \mathcal{K}(\tu )\de \tau } <1$. Additionally, $\tu (s) e^{-\int_0^s \mathcal{K}(\tu)\de \tau } $ is  bounded in $\mathbb{X} $ for $\tu \in \C^1([0,T]; \mathbb{Y})$. Thus, the strong convergences of the initial values and the right-hand sides $( \f u ^n_0 , \f f^n ) \ra ( \f u _0 , \f  f ) $ in $ \mathbb{V} \times   L^{p'} (0,T;\mathbb{W}^*) \bigoplus L^1(0,T; \mathbb{Z}^*)  $ allow to pass to the limit on the right-hand side of~\eqref{inconrel}. 
 Note that this convergence is independent of $\tu$ and hence uniform in $\tu$, it also holds, taking the supremum over $\tu\in \C^1([0,T]; \mathbb{Y}) $ with bounded energy and dissipation. 
Since the side-condition converge and the minimizing functional remains the same, the unique maximal dissipative solutions  $\f u^n$ converges to $\f u$ compare to~\cite[Prop.~1.3.2]{RoubicekMeasure}. Since all terms are bounded in $\mathbb{X}$, the convergence is also weak in $\mathbb{X}$.

\end{proof}

Throughout this paper, let $\Omega \subset \R^d$ be a Lipschitz domain with $d \geq  2$.
The space of smooth solenoi\-dal functions with compact support is denoted by $\mathcal{C}_{c,\sigma}^\infty(\Omega;\R^d)$. By $\f L^p_{\sigma}( \Omega) $, $\V(\Omega)$,  and $ \f W^{1,p}_{0,\sigma}( \Omega)$, we denote the closure of $\mathcal{C}_{c,\sigma}^\infty(\Omega;\R^d)$ with respect to the norm of $\f L^p(\Omega) $, $ \f H^1( \Omega) $, and $ \f W^{1,p}(\Omega)$ respectively.
Note that $\f L^2_{\sigma}(\Omega) $ can be characterized by $\f L^2_{\sigma}(\Omega) = \{ \f v \in L^2(\Omega)| \di \f v =0 \text{ in }\Omega\, , \f n \cdot \f v = 0 \text{ on } \partial \Omega \} $, where the first condition has to be understood in the distributional sense and the second condition in the sense of the trace in $H^{-1/2}(\partial \Omega )$. 
The dual space of a Banach space $V$ is always denoted by $ V^*$ and equipped with the standard norm; the duality pairing is denoted by $\langle\cdot, \cdot \rangle$ and the $L^2$-inner product by $(\cdot , \cdot )$. We use the standard notation $( \f H^1_0(\Omega))^*=\f H^{-1}(\Omega)$.
By $  \mathbb{M}^{d\times d}$ we denote $d$-dimensional quadratic matrices, by $  \mathbb{M}^{d\times d}_+$ the positive definite subset, and by $  \mathbb{M}^{d\times d}_{\sym,+}$ the symmetric positive definite matrices. The Radon measures taking values in a set $A \subset \R^n$ are denoted by $\mathcal{M}(\ov\Omega ; A ) $, which may be interpreted as the dual space of the continuous functions, \textit{i.e.,} $\mathcal{M}(\ov\Omega; A ) =(\C(\ov \Omega; A ) )^*$.


\section{Navier--Stokes equations\label{sec:nav}}
In this section, we apply the general result to the Navier--Stokes equations. 
\subsection{Weak solutions and dissipative solutions} 

First we recall the Navier--Stokes equations for the sake of completeness. 
\begin{align}
\begin{split}
\t \f v + ( \f v \cdot \nabla ) \f v - \nu \Delta \f v + \nabla p = \f f, \qquad \di \f v ={}& 0 \qquad \text{in }\Omega \times (0,T)\,,\\
\f v (0) ={}& \f v_0 \qquad \text{in } \Omega \,,\\
\f v = {}& 0 \qquad \text{on }\partial \Omega\times (0,T) \,.
\end{split}\label{nav}
\end{align}

The underlying spaces in the Navier--Stokes case are given by  $ \mathbb{X}= L^\infty(0,T; \Ha)\cap L^2(0,T;\V)$ and $\mathbb{Y}= C^1([0,T]; \f H^2 \cap \V )$. 
We define the relative energy $\mathcal{R}$ by
\begin{subequations}\label{def:nav}
\begin{align}
\mathcal{R}(\f v|\vv) ={}& \frac{1}{2}\| \f v -\vv \|_{L^2(\Omega)}^2 \,,
\intertext{the relative dissipation $\mathcal{W}$ by}
\mathcal{W}(\f v | \vv) ={}& \frac{\nu}{2} \| \nabla \f v-\nabla \vv \|_{L^2(\Omega)}^2 \,,
\intertext{the regularity measure $\mathcal{K}$ by}
\mathcal{K}(\vv)={}& \mathcal{K}_{s,r} (\vv)=c \| \vv\|_{L^{r}(\Omega)} ^{s} \quad \text{for } \frac{2}{s}+\frac{d}{r}\leq 1 \,,
\intertext{and the solution operator $\mathcal{A}$ by}
\langle \mathcal{A}(\vv) , \cdot \rangle ={}& \langle \t \vv + (\vv \cdot \nabla)\vv - \nu \Delta \vv  - \f f+ \nabla 
\tilde{p}, \cdot \rangle \,,\label{A}
\end{align}
\end{subequations}
which has to be understood in a weak sense, at least with respect to space. Note that the solution operator does not include boundary condition, since they are encoded in the underlying spaces. This may changes for different boundary conditions. 

First, we show that weak solutions to the Navier--Stokes equations are indeed dissipative solutions. 
The set of dissipative solutions can be seen as the convex envelope or super set of the set of weak solutions. 
The set of dissipative solutions is bigger, but it is easier to define descent selection criteria on this convex compact super set of weak solutions. 

 \begin{proposition}\label{prop:diss}
 Let $\f v \in \mathbb{X}$ be a weak solution to~\eqref{nav}. Then it is a dissipative solution. 
 \end{proposition}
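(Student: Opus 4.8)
The goal is to verify Definition~\ref{def:diss} for a weak solution $\f v$, which amounts to establishing the relative energy inequality~\eqref{relen} against every test function $\tu \in \C^1([0,T];\mathbb Y)$. The plan is to follow the formal computation carried out in Section~\ref{sec:gen} above, but now making each step rigorous using the regularity a weak solution actually has. First I would recall that a weak solution satisfies the energy inequality
\begin{align*}
\tfrac12 \| \f v(t) \|_{L^2(\Omega)}^2 + \nu \int_0^t \| \nabla \f v \|_{L^2(\Omega)}^2 \de s \leq \tfrac12 \| \f v_0 \|_{L^2(\Omega)}^2 + \int_0^t \langle \f f , \f v \rangle \de s
\end{align*}
for a.e.~$t \in (0,T)$, together with the weak formulation
\begin{align*}
\langle \t \f v , \f \varphi \rangle - \inte{\f v \o \f v : \nabla \f \varphi} + \nu \inte{\nabla \f v : \nabla \f \varphi} = \langle \f f , \f \varphi \rangle
\end{align*}
for all $\f \varphi \in \V$, which is legitimate to test against $\tu(t)$ since $\tu \in \f H^2 \cap \V$ and since $\t \f v \in L^{p'}(0,T;\Vd)$ in the relevant sense.

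\textbf{Key steps.} The computation proceeds in four moves. (i) Expand $\mathcal R(\f v(t)|\tu(t)) = \tfrac12\|\f v\|^2 - (\f v,\tu) + \tfrac12\|\tu\|^2$ and differentiate in time; the $\tfrac12\|\f v(t)\|^2$ term is controlled only by the energy \emph{inequality}, which is what turns the identity into an inequality, while the cross term $-(\f v,\tu)$ and the $\tfrac12\|\tu\|^2$ term are handled via the weak formulation tested with $\tu$ and via $\t\tu \in \C([0,T];\f H^2\cap\V)$, respectively. (ii) Substitute the weak formulation to replace $\langle \t \f v , \tu \rangle$ by $\inte{\f v \o \f v : \nabla \tu} - \nu \inte{\nabla \f v : \nabla \tu} + \langle \f f , \tu \rangle$, and recognise $\langle \mathcal A(\tu), \f v - \tu \rangle$ by adding and subtracting $\langle \t\tu + (\tu\cdot\nabla)\tu - \nu\Delta\tu - \f f + \nabla\tilde p, \f v - \tu\rangle$; the pressure term drops because $\f v - \tu$ is solenoidal with zero normal trace. (iii) Collect the quadratic-in-$(\f v-\tu)$ remainder. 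The dissipative part gives exactly $-\nu\int_0^t \|\nabla\f v - \nabla\tu\|^2 = -2\int_0^t \mathcal W(\f v|\tu)$ since the energy is quadratic, so the $D^2\mathcal E$-term in~\eqref{condition} vanishes identically. The only genuinely nonlinear term left is the convective remainder, which after the standard manipulation (using $\di \f v = \di \tu = 0$ and $(\f v - \tu, (\f v - \tu)\cdot\nabla(\f v-\tu)) = 0$) reduces to $-\int_0^t \inte{((\f v - \tu)\cdot\nabla)\tu \cdot (\f v - \tu)}$. (iv) Estimate this term: $|\inte{((\f v-\tu)\cdot\nabla)\tu\cdot(\f v-\tu)}| \leq \|\tu\|_{L^r}\|\f v - \tu\|_{L^{2r/(r-2)}}^2$, and then interpolate $L^{2r/(r-2)}$ between $L^2$ and $L^{2^*}$, absorbing the gradient part into $\nu\|\nabla\f v - \nabla\tu\|^2/2$ by Young's inequality, leaving a multiple of $\|\tu\|_{L^r}^s \mathcal R(\f v|\tu) = \mathcal K_{s,r}(\tu)\mathcal R(\f v|\tu)$ with the Serrin exponents $\tfrac 2s + \tfrac ds = 1$. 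This is precisely Assumption~\textbf{(A2)} in the concrete form~\eqref{condition}. Finally, apply Gronwall's lemma, exactly as in the general derivation, to arrive at~\eqref{relen}.

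\textbf{Main obstacle.} The main technical point is not any single estimate but the rigour of step (i)–(ii): justifying the time-integration-by-parts $(\f v(t),\tu(t)) - (\f v_0,\tu(0)) = \int_0^t \langle \t\f v, \tu\rangle + \int_0^t \langle \t\tu, \f v\rangle$ for a weak solution, which has $\f v \in \Cw([0,T];\Ha)$ and $\t\f v$ only in the dual of $L^2(0,T;\V)$, and ensuring the energy inequality holds at the right endpoint $t$ and with the right initial value $\f v_0$ (one uses that the energy inequality holds for a.e.~$t$ and, combined with weak continuity, at $t=0$ with value $\tfrac12\|\f v_0\|^2$). One must also be mildly careful that the convective term $\inte{\f v\o\f v:\nabla\tu}$ makes sense, which it does since $\f v \in L^4(0,T;L^3)$ in $d\le 3$ (or by the corresponding interpolation in higher dimensions) and $\nabla\tu$ is bounded. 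Once these regularity bookkeeping matters are settled, the algebraic identity and the Serrin-type estimate go through exactly as in the formal computation, and the statement follows by identifying all the objects in~\eqref{def:nav} with those of Theorem~\ref{thm:gen}.
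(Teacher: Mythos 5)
Your proposal is correct and follows essentially the same route as the paper: energy inequality plus the weak formulation tested against the smooth test function, identification of $\mathcal{A}(\vv)$, the skew-symmetry reduction of the convective terms, the H\"older--Gagliardo--Nirenberg--Young estimate producing $\mathcal{K}_{s,r}$, and a Gronwall-type step. The only notable difference is technical: the paper never differentiates $(\f v,\vv)$ in time, but instead tests with time-dependent functions $\phi\vv$ and $\phi\f v$ for $\phi\in\C^1_c([0,T))$ and passes between the pointwise-in-time and weighted formulations via Lemma~\ref{lem:invar}, finally replacing $\phi$ by $\varphi e^{-\int_0^t\mathcal{K}(\vv)\de s}$ rather than invoking Gronwall directly, which is precisely how it sidesteps the integration-by-parts rigour issue you flag as the main obstacle.
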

 \begin{proof}
 Let $\f v $ be a weak solution to the Navier--Stokes equation with energy inequality. Then it fulfills the weak formulation
\begin{align}
-\int_0^ T \int_\Omega \f v \t \f \varphi  \de \f x \de t + \int_0^T \int_\Omega \left ( \nu \nabla \f v : \nabla \f \varphi + ( \f v \cdot \nabla ) \f v \cdot \f \varphi \right ) \de \f x \de t = \int_0^T \langle \f f , \f \varphi \rangle \de t + \int_\Omega \f v \cdot\f \varphi(0) \de \f x 
\label{weakv}
\end{align}
for $\f \varphi \in\C^1_c([0,T);\mathcal{C}_{c,\sigma}^\infty(\Omega;\R^3))$and the energy inequality
\begin{align}
\frac{1}{2}\| \f v(t) \| _{L^2(\Omega)}^2 +\nu \int_0^t \| \nabla \f v \|_{L^2(\Omega)}^2 \de s \leq \frac{1}{2}\| \f v_0 \|_{L^2(\Omega)}^2 + \int_0^t \langle \f f, \f v \rangle \de s \quad \text{for a.e.~}t\in(0,T) \,.\label{eninv}
\end{align}

For a test function $\vv\in  \C^1([0,T];\mathbb{Y})$, we find by testing the solution operator $\mathcal{A}(\vv)$ by $\phi \vv$ with $\phi \in \C^1_c([0,T))$ and standard calculations that
\begin{multline}
\int_0^T \phi \left \langle\mathcal{A}(\vv)\vv  \right\rangle   \de t =\\ -\int_0^T \phi '   \frac{1}{2} \| \vv (t) \|^2_{L^2(\Omega)} \de t   + \int_0^T \phi\left ( \nu  \| \nabla \f v \|^2_{L^2(\Omega)} - \langle \f f , \vv \rangle\right )  \de t - \phi(0) \frac{1}{2}\| \vv (0) \|_{L^2(\Omega)}^2  \label{eninvv}
\end{multline}
Testing again the solution operator $\mathcal{A}(\vv)$ by $\phi\f v$ and~\eqref{weakv} by $\phi \vv$ with $\phi \in \C^1_c([0,T))$, we find
\begin{multline}
- \int_0^T \phi' \int_\Omega \f v \cdot \vv \de \f x \de t + \int_0^T \phi\int_\Omega \left ( 2 \nu \nabla \f v : \nabla \vv + ( \f v \cdot \nabla ) \f v \cdot \vv + ( \vv \cdot \nabla) \vv \cdot \f v \right ) \de \f x \de t \\= \int_0^T \phi \left\langle  \mathcal{A}(\vv), \f v\right \rangle  \de t + \phi(0) \int_\Omega \f v_0 \cdot \vv(0) \de \f x+ \int_0^T \phi\langle\f  f , \vv + \f v \rangle \de t  \,.\label{eninboth}
\end{multline}
Reformulating~\eqref{eninv} by Lemma~\ref{lem:invar}, adding~\eqref{eninvv}, and subtracting~\eqref{eninboth}, let us deduce that 
\begin{multline}
-\int_0^T \phi'\frac{1}{2}\| \f v - \vv \|_{L^2(\Omega)}^2 \de t  +\nu \int_0^T \phi \| \nabla \f v - \nabla \vv \|^2_{L^2(\Omega)}\de t - \phi(0) \frac{1}{2}\| \f v -\vv \|_{L^2(\Omega)}^2 \\  \leq  \int_0^T\phi  \int_\Omega  \left ( ( \f v \cdot \nabla ) \f v \cdot \vv + ( \vv \cdot \nabla) \vv \cdot \f v\right )  \de \f x \de t + \int_0^T\phi \left  \langle \mathcal{A}(\vv),\vv- \f v\right \rangle  \de t\,\label{secrelen}
\end{multline}
for all $\phi\in\tilde{C}([0,T))$. 
In the following, we estimate the convective terms as in the proof of Serrin's result. 
Therefore, we use some standard manipulations using the skew-symmetry of the convective term in the last two arguments and the fact that $\vv$ is divergence free, to find
\begin{align*}
\int_\Omega  ( \f v \cdot \nabla ) \f v \cdot \vv + ( \vv \cdot \nabla) \vv \cdot \f v \de \f x ={}& \int_\Omega \big (  (( \f v-\vv) \cdot \nabla ) (\f v-\vv)  \cdot \vv + ( \vv \cdot \nabla) (\vv- \f v) \cdot (\f v- \vv) \big ) \de \f x\\ ={}&  \int_\Omega  (( \f v-\vv) \cdot \nabla ) (\f v-\vv)  \cdot \vv \de \f x\,.
\end{align*}
H\"older's, Gagliardo--Nirenberg's, and Young's inequality provide the estimate
\begin{align}
\begin{split}
 \int_\Omega  (( \f v-\vv) \cdot \nabla ) (\f v-\vv)  \cdot \vv \de \f x \leq{}&  \| \f v - \vv \|_{L^p(\Omega)} \| \nabla \f v - \nabla \vv \|_{L^2(\Omega)} \| \vv \|_{L^{2p/(p-2)}(\Omega)} \\ \leq{}& c_p \| \f v - \vv \|_{L^2(\Omega)} ^{(1-\alpha)}
 \| \nabla \f v - \nabla \vv \|_{L^2(\Omega)}^{(1+\alpha)}
  \| \vv \|_{L^{2p/(p-2)}(\Omega)} 
 \\ \leq{}& \frac{\nu}{2} \| \nabla \f v - \nabla \vv\|_{L^2(\Omega)}^2 + c \| \vv\|_{L^{2p/(p-2)}(\Omega)} ^{2/(1-\alpha)}
  \| \f v - \vv \|_{L^2(\Omega)}^2 \,,
\end{split}\label{Kest}
\end{align}
where $\alpha $ is chosen according to Gagliardo--Nirenberg's inequality by $$\alpha = d (p-2)/2p\quad\text{for}\quad  d\leq 2p/(p-2)\,.$$ 
Inserting this into~\eqref{secrelen} and 
replace $\phi$ by $ \varphi
e^{-\int_0^t\mathcal{K}(\vv) \de s }$ (or approximate it appropriately), we get 
\begin{multline*}
-\int_0^T \varphi '\frac{1}{2 }\| \f v(t) - \vv(t) \|_{L^2(\Omega)}^2 e^{-\int_0^t c \| \vv\|_{L^{2p/(p-2)}(\Omega)} ^{2/(1-\alpha)}
\de s }\de t - \frac{1}{2} \| \f v_0 - \vv(0 ) \|_{L^2(\Omega)}^2  \\ +\int_0^T \varphi  \left ( \frac{\nu}{2}\| \nabla \f v - \nabla \vv \|^2_{L^2(\Omega)}+\left \langle \mathcal{A}(\vv),\f v - \vv  \right \rangle \right )  e^{-\int_0^t c \| \vv\|_{L^{2p/(p-2)}(\Omega)} 
^{2/(1-\alpha)}
\de t }  \de s \leq 0 \,
\end{multline*}
for every smooth function $\vv\in\C^1([0,T];\mathbb{Y})$ and  all $\varphi \in \tilde{\C}([0,T])$.

 \end{proof}

 \begin{proposition}\label{prop:cont}
 Assume that there exists a weak solution $\f v\in \mathbb{X}$ with $\f v \in L^s(0,T;L^r(\Omega))$ with $2/s+d/r\leq 1$ to given right-hand side $\f f\in L^2(0,T;\f H^{-1} (\Omega))$ and initial datum $\f v_0\in \Ha$. For any $\f f^1\in L^2(0,T;\f H^{-1} (\Omega))$ and $\f v^1 _0\in \Ha$ the associated  dissipative solutions $ \f v^1$ fulfill the estimate
 \begin{align*}
 \mathcal{R}(\f v^1(t) | \f v(t)) +\frac{1}{2}\int_0^t  \mathcal{W}(\f v ^1 , \f v )  e^{\int_s^t\mathcal{K}(\f v) \de \tau} \de s \leq{}& \mathcal{R}(\f v^1_0 | \f v_0 ) e^{\int_0^t \mathcal{K}(\f v) \de s } \\&+ \frac{c}{\nu}\int_0^t \| \f f- \f f^1 \|_{\f H^{-1}(\Omega) }^2e^{\int_s^t\mathcal{K}(\f v) \de \tau}  \de s
 \end{align*}
 for a.e.~$t\in(0,T) $.
 \end{proposition}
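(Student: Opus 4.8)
The plan is to exploit the fact that $\f v$ is itself a weak solution (hence regular enough, by the Serrin-type bound $\f v\in L^s(0,T;L^r(\Omega))$, to serve as a test function of the type needed), while $\f v^1$ is only a dissipative solution for the \emph{perturbed} data $(\f v^1_0,\f f^1)$. The idea is therefore \emph{not} to insert $\f v$ directly as the smooth test function $\vv$ in the dissipative inequality for $\f v^1$ --- $\f v$ is not in $\mathbb Y=\C^1([0,T];\f H^2\cap\V)$ --- but rather to use a density argument: approximate $\f v$ by a sequence $\vv_k\in\C^1([0,T];\mathbb Y)$ with $\vv_k\to\f v$ in the topologies dictated by $\mathbb X$ and $L^s(0,T;L^r(\Omega))$, write down the relative energy inequality~\eqref{relen} (with $\f f$ replaced by $\f f^1$, i.e.\ the operator $\mathcal A_{\f f^1}$) for $\f v^1$ tested against $\vv_k$, and pass to the limit $k\to\infty$. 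All terms in~\eqref{relen} are continuous with respect to this convergence: $\mathcal R$ and $\mathcal W$ are continuous in the second slot along this approximation, $\mathcal K(\vv_k)=c\|\vv_k\|_{L^r}^s\to\mathcal K(\f v)$ in $L^1(0,T)$, and the term $\langle\mathcal A_{\f f^1}(\vv_k),D\mathcal E(\f v^1)-D\mathcal E(\vv_k)\rangle$ converges because $\f v^1\in\mathbb X$ is bounded and $\mathcal A_{\f f^1}(\vv_k)\to\mathcal A_{\f f^1}(\f v)$ in the dual pairing --- here one uses crucially that $\f v$ is a genuine weak solution so that $\mathcal A_{\f f}(\f v)=0$ in the weak sense, and hence $\mathcal A_{\f f^1}(\f v)=\mathcal A_{\f f}(\f v)+(\f f-\f f^1)=\f f-\f f^1$.

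Concretely, after passing to the limit one obtains
\begin{align*}
\mathcal R(\f v^1(t)|\f v(t))+\int_0^t\Bigl(\mathcal W(\f v^1,\f v)+\langle\f f-\f f^1,\f v^1-\f v\rangle\Bigr)e^{\int_s^t\mathcal K(\f v)\,\de\tau}\,\de s\leq\mathcal R(\f v^1_0|\f v_0)\,e^{\int_0^t\mathcal K(\f v)\,\de s}\,,
\end{align*}
where I have used $\langle\mathcal A_{\f f^1}(\f v),D\mathcal E(\f v^1)-D\mathcal E(\f v)\rangle=\langle\f f-\f f^1,\f v^1-\f v\rangle$ and that $D\mathcal E$ is the identity on $\Ha$ for the quadratic energy $\tfrac12\|\cdot\|_{L^2}^2$. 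It then remains to absorb the perturbation term: estimate
\begin{align*}
|\langle\f f-\f f^1,\f v^1-\f v\rangle|\leq\|\f f-\f f^1\|_{\f H^{-1}(\Omega)}\|\nabla(\f v^1-\f v)\|_{L^2(\Omega)}\leq\frac{\nu}{2}\|\nabla\f v^1-\nabla\f v\|_{L^2(\Omega)}^2+\frac{c}{\nu}\|\f f-\f f^1\|_{\f H^{-1}(\Omega)}^2\,,
\end{align*}
using Young's inequality. The first term on the right is exactly $\mathcal W(\f v^1,\f v)$ (recall $\mathcal W(\f v^1|\f v)=\tfrac\nu2\|\nabla\f v^1-\nabla\f v\|_{L^2}^2$), so half of it survives on the left-hand side and the other half is the $-\tfrac12\mathcal W$ correction that accounts for the factor $\tfrac12$ in front of the dissipation integral in the statement; the remaining $\tfrac c\nu\|\f f-\f f^1\|_{\f H^{-1}}^2$ moves to the right. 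Finally one uses that $\f v^1$ satisfies its own energy inequality and the Gronwall structure is already built into the $e^{\int_s^t\mathcal K(\f v)\de\tau}$ weights, so no further Gronwall step is needed; the measurability/a.e.-in-$t$ issue is handled exactly as in Lemma~\ref{lem:invar}.

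The main obstacle is the approximation step: one must verify that $\f v$ --- which a priori only lies in $\mathbb X\cap L^s(0,T;L^r(\Omega))$ --- can be approximated by functions in $\C^1([0,T];\f H^2\cap\V)$ strongly enough that every term in~\eqref{relen}, in particular the nonlinear convective contribution hidden in $\langle\mathcal A(\vv_k),\cdot\rangle$ and the exponential weight $e^{-\int_0^t\mathcal K(\vv_k)\de s}$, converges. This is the same technical point that underlies the weak-strong uniqueness property and can be carried out by mollification in time combined with a Galerkin/Stokes-operator regularization in space; the Serrin integrability $2/s+d/r\le1$ is precisely what is needed to control the convective term uniformly along the approximation, exactly as in the estimate~\eqref{Kest} in the proof of Proposition~\ref{prop:diss}. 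Alternatively, if one has already established (as in Remark~\ref{rem:disreg} and Proposition~\ref{prop:diss}) that the relative energy inequality~\eqref{relen} extends by continuity to all test functions in the closure of $\C^1([0,T];\mathbb Y)$ containing weak solutions with Serrin regularity, one may simply invoke that extension and the proof reduces to the two Young-inequality lines above.
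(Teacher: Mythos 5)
Your proposal follows essentially the same route as the paper: test the dissipative inequality for $\f v^1$ (with $\mathcal{A}_{\f f^1}$) by the Serrin-regular weak solution $\f v$ (via approximation, which the paper also only sketches with ``or approximate it appropriately''), use that $\mathcal{A}_{\f f}(\f v)=0$ so that $\mathcal{A}_{\f f^1}(\f v)=\f f-\f f^1$, absorb the perturbation term by Young's inequality together with Poincar\'e/Korn, and multiply by the exponential weight. One bookkeeping fix: in your Young step you absorb $\tfrac{\nu}{2}\|\nabla(\f v^1-\f v)\|_{L^2}^2$, which is the \emph{whole} of $\mathcal{W}(\f v^1|\f v)$ and would leave no dissipation on the left, contradicting your subsequent claim that ``half of it survives''; as in the paper you should absorb only $\tfrac{\nu}{4}\|\nabla(\f v^1-\f v)\|_{L^2}^2$ (at the cost of the constant $\tfrac{c}{\nu}$ in front of $\|\f f-\f f^1\|_{\f H^{-1}}^2$), which leaves exactly $\tfrac12\mathcal{W}$ on the left-hand side as in the statement.
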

\begin{remark}
The previous result gives no assertion on the continuous dependence on the data in general, but only conditionally, if a unique weak solution exists.  This can only be proven to be the case locally in time (see~\cite{temam}).
If such a strong solution does not exist, the continuous dependence only holds in the weak topology. For maximal dissipative solutions this continuous dependence is given in Theorem~\ref{thm:main}, below.
\end{remark} 
 
\begin{proof}[Proof of Proposition~\ref{prop:cont}]

Choosing $\vv$ to be the weak regular solution $\f v$ (or approximate it appropriately), we find by the Definition~\ref{def:diss} that
\begin{align}
\begin{split}
0 \geq{}&  \ \mathcal{R}(\f v^1(t)|\f v (t)) e^{-\int_0^t\mathcal{K}(\f v(s))\de s }  \de t -  \mathcal{R}(\f v^1 _0,\f v_0)  \\&{}+ \int_0^t  \left ( \mathcal{W}(\f v^1(s) | \f v (s)) +
\left \langle \mathcal{A}_{\f f^1}(\f v (s)) ,\f v^1 (s)-\f v(s)  \right \rangle \right )e^{-\int_0^s\mathcal{K}(\f v (s))\de s }   \de s\Big)\,,
\end{split}\label{continuous}
\end{align}
where $\mathcal{A}_{\f f^1}$ denotes the solution operator~\eqref{A} with $\f f$ replaced by $\f f^1$. 
Since $\f v$ is a solution for the right-hand side $\f f$, we may estimate
\begin{multline*}
\left \langle \mathcal{A}_{\f f^1}(\f v (t))- \mathcal{A}_{\f f}(\f v (t))   ,\f v^1 (t)-\f v(t)  \right \rangle = \left \langle \f f (t)- \f f^1(t) ,\f v^1 (t)-\f v(t)  \right \rangle \\ \geq - \frac{\nu}{4} \| \nabla \f v ^1 (t)-  \nabla \f v(t) \| _{L^2(\Omega)}^2 - \frac{c}{\nu} \| \f f(t) - \f f^1(t)\|_{H^{-1}(\Omega)} ^2 \,,
\end{multline*}
where Korn's inequality was applied (see~\cite[Thm.~10.15]{singular}). 
Reinserting this estimate into~\eqref{continuous}, applying Lemma~\ref{lem:invar} and multiplying by    
  $ e^{\int_0^t \mathcal{K}(\f v(\tau ) ) \de s }$, we find
the assertion of Proposition~\ref{prop:cont}.

\end{proof}

 \subsection{Well-posedness of maximal dissipative solutions}
 
We may state now the main theorem of this article:
\begin{theorem}\label{thm:main}
 Let $ \Omega \subset \R^d$ for $d\geq 2$ a Lipschitz domain, $\nu > 0$. Let $\mathcal{R}$, $\mathcal{W}$, $\mathcal{K}$, and $\mathcal{A}$ be given as above in~\eqref{def:nav}. Then there exists a unique maximal dissipative solution $\f v \in \mathbb{X}$ to every  $\f v_0\in \Ha$ and $\f f \in L^2 (0,T; H^{-1}(\Omega))$ 
  in the sense of Definition~\ref{def:maxdiss} and the maximal dissipative solution depends continuously on the initial datum and the right-hand side in the sense
$( \f v ^n_0 , \f f^n ) \ra ( \f v _0 , \f  f ) $ in $ \Ha \times L^2 (0,T; H^{-1}(\Omega))\cap L^1(0,T;L^2(\Omega))$, then  to every $n\in \N$ there exists a maximal dissipative solution $\f v^n \in \mathbb{X}$ and it holds 
$ \f v^n \stackrel{*}{\rightharpoonup} \f v $ in $\mathbb{X}$. 
 \end{theorem}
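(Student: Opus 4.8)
The plan is to deduce Theorem~\ref{thm:main} directly from the general result Theorem~\ref{thm:gen} by checking that the concrete data $\mathcal{R}$, $\mathcal{W}$, $\mathcal{K}$, $\mathcal{A}$ from~\eqref{def:nav} fit into the abstract framework, together with the non-emptiness of the set of dissipative solutions supplied by Proposition~\ref{prop:diss} and Leray's classical existence theory. Concretely, I would set $\mathbb{V} = \Ha$, $\mathbb{W} = \V$, $p = 2$, $\mathbb{Z} = \mathbb{V}^* = \f H^{-1}(\Omega)$ (identifying $\Ha$ with a dense subspace of its dual in the usual way only formally — really keeping $\mathbb{Z}^* = \Ha$), $\mathcal{E}(\f v) = \tfrac12\|\f v\|_{L^2(\Omega)}^2$, which is quadratic with $D\mathcal{E} = \mathrm{id}$ linear, so Assumption~\textbf{(A4)} on the energy holds; $\f K$ is (minus) the Stokes operator, of quadratic growth $c_{\f K}(\|\f w\|_{\V}^2 - 1) \le \langle \f K(\f w),\f w\rangle \le C_{\f K}\|\f w\|_{\V}^2 + 1$ via Poincaré, and $\mathcal{W}(\f v|\vv) = \tfrac\nu2\|\nabla\f v - \nabla\vv\|_{L^2}^2$ is manifestly convex and weakly lower semi-continuous in $\f v$, so all of~\textbf{(A4)} is verified. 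Assumption~\textbf{(A3)} is trivial here ($\|D\mathcal{E}(\f v)\|_{\f H^{-1}} = \|\f v\|_{\f H^{-1}} \le C(\mathcal{E}(\f v)+1)$), and~\textbf{(A1)} holds with $\f L(\f v)D\mathcal{E}(\f v)$ the Leray projection of $-(\f v\cdot\nabla)\f v$, which is skew-symmetric and satisfies $\langle D\mathcal{E}(\f v),\f L(\f v)D\mathcal{E}(\f v)\rangle = \langle \f v, -(\f v\cdot\nabla)\f v\rangle = 0$ by the divergence-free condition.

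With these identifications, the state space $\mathbb{X} = L^\infty(0,T;\Ha)\cap L^2(0,T;\V)$ and the admissible right-hand sides $\f f\in L^{p'}(0,T;\mathbb{W}^*)\bigoplus L^1(0,T;\mathbb{Z}^*) = L^2(0,T;\f H^{-1}(\Omega))\bigoplus L^1(0,T;\Ha)$; since $L^2(0,T;\f H^{-1}(\Omega)) \subset L^2(0,T;\f H^{-1}(\Omega))\bigoplus L^1(0,T;\Ha)$, any $\f f\in L^2(0,T;\f H^{-1}(\Omega))$ is covered, and the convergence hypothesis $(\f v_0^n,\f f^n)\to(\f v_0,\f f)$ in $\Ha\times(L^2(0,T;\f H^{-1})\cap L^1(0,T;L^2))$ is exactly the strong convergence required in Theorem~\ref{thm:gen}. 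Next I would invoke Proposition~\ref{prop:diss}: for every $\f v_0\in\Ha$ and $\f f\in L^2(0,T;\f H^{-1}(\Omega))$ a Leray--Hopf weak solution exists (classical, e.g.~\cite{temam}), hence by Proposition~\ref{prop:diss} a dissipative solution in the sense of Definition~\ref{def:diss} exists, so the set of dissipative solutions is non-empty for every admissible datum. This is the hypothesis needed to apply Theorem~\ref{thm:gen}, which then yields that the set of dissipative solutions is closed and convex, that the minimization problem of Definition~\ref{def:maxdiss} has a unique solution (the strictly convex functional $\f v\mapsto\tfrac12\int_0^T\|\f v\|_{L^2}^2\,\de t$ attains its minimum uniquely on a closed convex set), and that the maximal dissipative solution depends continuously, in the sense $\f v^n\stackrel{*}{\rightharpoonup}\f v$ in $\mathbb{X}$, on $(\f v_0,\f f)$.

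The one point requiring a small amount of care — and the step I expect to be the main obstacle — is verifying Assumption~\textbf{(A2)}, i.e.~that the relative-energy computation genuinely closes with the choice $\mathcal{K} = \mathcal{K}_{s,r}(\vv) = c\|\vv\|_{L^r(\Omega)}^s$ for $2/s + d/r\le 1$. Here the quadratic energy makes the $D^2\mathcal{E}$-term in~\eqref{condition} vanish identically, so what has to be controlled is only the $\f L$-difference term, which after the manipulations in the proof of Proposition~\ref{prop:diss} reduces to the convective term $\int_\Omega ((\f v-\vv)\cdot\nabla)(\f v-\vv)\cdot\vv\,\de\f x$; the chain of Hölder, Gagliardo--Nirenberg and Young inequalities in~\eqref{Kest} bounds this by $\tfrac\nu2\|\nabla\f v-\nabla\vv\|_{L^2}^2 + c\|\vv\|_{L^{2p/(p-2)}}^{2/(1-\alpha)}\|\f v-\vv\|_{L^2}^2 = \mathcal{W}(\f v|\vv) + \mathcal{K}(\vv)\mathcal{R}(\f v|\vv)$ with the exponents $s = 2/(1-\alpha)$, $r = 2p/(p-2)$ satisfying $2/s + d/r \le 1$, which is precisely~\textbf{(A2)}. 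So essentially all the analytical content has already been discharged in Proposition~\ref{prop:diss}, and the proof of Theorem~\ref{thm:main} is the bookkeeping of matching the Navier--Stokes objects to \textbf{(A1)}, \textbf{(A3)}, \textbf{(A4)} and then quoting Theorem~\ref{thm:gen} together with Leray's existence result; I would write it in exactly that order.
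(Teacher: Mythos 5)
Your reduction to Theorem~\ref{thm:gen} (identifying $\mathbb V=\Ha$, $\mathbb W=\V$, $p=2$, $\f K$ the Stokes operator, $\f L$ the convective term, checking \textbf{(A1)}, \textbf{(A3)}, \textbf{(A4)}, and noting that the closedness/convexity plus strict convexity of the energy functional give uniqueness and weak-$*$ continuous dependence) is exactly the paper's strategy, and your treatment of the convective term via~\eqref{Kest} matches the paper's verification in Proposition~\ref{prop:diss}. However, there is a genuine gap at the one place where you discharge the existence hypothesis of Theorem~\ref{thm:gen}: you claim that for \emph{every} $d\geq 2$ a Leray--Hopf weak solution with energy inequality exists, citing~\cite{temam}. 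The theorem is stated for arbitrary $d\geq 2$, while the classical existence theory in~\cite{temam} (and the applicability of Proposition~\ref{prop:diss}, whose proof tests the weak formulation~\eqref{weakv} against functions in $\C^1([0,T];\f H^2\cap\V)$) covers $d=2,3,4$; for $d\geq 5$ neither the reference nor the duality pairings in that argument are available at the energy regularity $L^\infty(0,T;\Ha)\cap L^2(0,T;\V)$, so your chain ``weak solution $\Rightarrow$ dissipative solution'' does not deliver non-emptiness of the dissipative solution set in high dimensions.

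The paper closes precisely this gap differently: for $d\geq 5$ it does not go through weak solutions at all, but constructs a dissipative solution directly by a Galerkin scheme with Stokes eigenfunctions, derives a \emph{discrete} relative energy inequality for the approximations $\f v^n$ tested against $P_n\vv$, and passes to the limit in that inequality using only the weak-$*$ convergence in $\mathbb X$ coming from the energy estimate --- no Aubin--Lions compactness, no stability of $P_n$, and no need to make sense of the weak formulation itself in high dimension, since $\mathcal R(\cdot|\tu)$ and $\mathcal W(\cdot|\tu)$ are weakly lower semi-continuous and $D\mathcal E(\f v^n)$ enters only linearly against $\mathcal A(\tu)$. To repair your proposal you should either restrict the dimension to $d\leq 4$, or supplement it with such a direct construction (or an appropriate reference) of dissipative solutions valid for all $d\geq 2$; the rest of your argument then goes through as written.
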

  
 
 \begin{remark}[Comparison to weak solutions]\label{weakvs}
 In the case that there exists a weak solution to the Na\-vier--Stokes equation complying to Serrin's uniqueness criterion, we observe that it is a maximal dissipative solution. 
 Indeed, let $\f v$ be a weak solution enjoying the regularity
 \begin{align*}
 \f v\in L^s(0,T;L^r(\Omega)) \quad \text{for } \frac{2}{s} + \frac{d}{r}\leq 1 \,,
 \end{align*}
 then the regularity measure $\mathcal{K}$ is bounded and we may use it as a test function $\vv$ (or rather approximate it by test functions) in the formulation of Definition~\ref{def:maxdiss}. Note that using density arguments, $\mathbb{Y}$ could be replaced by $ \mathbb{X}\cap L^s(0,T;L^r(\Omega))\cap W^{1,2}(0,T; ( \V  )^*)$ with $s$ and $r$ fulfilling the above condition.
 We observe that $\mathcal{F}(\f v | \f v) =0$, which is indeed the minimum since for every other function $\f u\in L^\infty(0,T;L^2_\sigma(\Omega)) \cap L^2(0,T;\V)$ emanating from the same initial datum, we observe that $\mathcal{F}(\f u|\f v) >0$. 
 Thus, maximal dissipative solutions coincide with weak solutions as long as the latter are unique.

 \end{remark}
 \begin{remark}[Reintroduction of the pressure]
 In this work, we only consider the velocity field for simplicity. Due to the fact that no equation is fulfilled  in the maximal dissipative solution concept, we do not 
have to worry about choosing the pressure in such a way that the full Navier--Stokes equation is fulfilled in a distributional sense (see~\cite{simon}). We propose to calculate the pressure by solving the usual elliptic Neumann boundary value problem
 \begin{align*}
- \Delta  p &= \nabla^2 :  \left ( \f v \otimes  \f v  \right ) - \di \f f \,, \quad &&\text{in } \Omega \\
\f n \cdot \nabla  p &=\f n \cdot \f f - \f n\cdot ( ( \f v \cdot \nabla ) \f v )\,, \quad &&\text{on }\partial \Omega 
 \end{align*}
 in a very weak sense with $ p \in L^1(\Omega)$ and  the additional normalization 
 $\int_\Omega p(t) \de \f x =0$ a.e.~in $(0,T)$.
 The previous formulation for the pressure especially makes sense, if one considers a suitable approximation of the Navier--Stokes equation, \textit{i.e.}, by a Galerkin approximation with a Galerkin space spanned by eigenfunctions of the Stokes operator. 
 Another approach would be to consider the Leray projection of the equations, see Remark~\ref{rem:proj}.
 \end{remark}
 \begin{remark}[other boundary conditions]
 In order to incorporate different boundary conditions it is sufficient to adapt the function space for the solution,\textit{ i.e.}, $\mathbb{X}$,  the test functions, \textit{i.e.},  $\mathbb{Y}$, and the formulation of the operator $\mathcal{A}$.  
 \end{remark}
 \begin{proof}[Proof of Theorem~\ref{thm:main}] 
The assumptions on the general problem in Assumption \textbf{(A1)} and \textbf{(A4)} are trivially fulfilled for the considered case, where $ \mathbb{V}= \Ha $, $\f K: \f H^2 \cap \V \ra \Ha^*$ is the Stokes operator, $\f L: \f H^2 \cap \V \ra \Ha^*$ the convective term and $p=2$. 
 It thus only remains to prove the existence of dissipative solutions in the case of the Navier--Stokes equations. 
In the case of $d=2,\,3$ or $4$, the existence of weak solutions to the Navier--Stokes equations is well known (see for instance~\cite{temam}). Due to Proposition~\ref{prop:diss}, this also proves the existence of dissipative solutions and thus maximal dissipative solutions.

For abitrary dimension, we may follow the standard discretization approach of Temam~\cite{temam} to prove the existence of dissipative solutions. 
Due to~\cite[p.~27f.]{temam} there exists a Galerkin basis consisting of  eigenfunctions $\f w_1, \, \f w_2 , \, \ldots \in \ \f H^1_{0,\sigma}  $ of the Stokes operator (with homogeneous Dirichlet boundary conditions). As is well known, the eigenfunctions form an orthogonal basis in $\Ha$ as well as in $\V$. Let $W_n= \spa \left \{ \f w_1, \dots , \f w _n\right \} $ ($n\in \N$)
and let $P_n : \Ha \longrightarrow W_n$ denote the
$\Ha$-orthogonal projection onto $W_n$.
The approximate problem is then given as follows: Find a solution $\f v^n \in \mathcal{AC}([0,T];V_n)$ solving the system
\begin{align}
\left ( \t \f v ^n + ( \f v ^n  \cdot \nabla) \f v ^n , \f w \right ) + \nu \left ( \nabla \f v^n \nabla \f w \right ) = \left \langle \f f , \f v^n \right \rangle \,, \quad \f v ^n(0) = P_n \f v_0 \,.\label{vdis}
\end{align}

A classical existence theorem (see Hale~\cite[Chapter I, Theorem 5.2]{hale}) provides, for every $n\in\N$, the existence of a maximal extended solution to the above approximate problem~\eqref{vdis} on an
interval~$[0,T_n)$ in the sense of Carath\'e{}odory.
 This theorem grants a solution  on $[0,T]$ if the solution undergoes no blow-up. With the standard \textit{a priori} estimates, we can exclude blow-ups and thus prove global-in-time existence. \label{sec:exloc}
Testing~\eqref{vdis} by $\f v^n$, we derive the standard energy estimates
\begin{align}
\frac{1}{2} \| \f v^n \|_{L^2(\Omega)}^2 + \nu\int_0^t  \| \nabla \f v ^n\|_{L^2(\Omega)}^2 \de s = \frac{1}{2}\| \f v_0 \| _{L^2(\Omega)}^2 + \int_0^t \langle \f f , \f v^n \rangle \de s \,\label{energydis}
\end{align}
for $\f f \in L^2(0,T;\f H^{-1}(\Omega)) \otimes L^1(0,T; \f L^2(\Omega)) $, there exists two functions $\f f_1\in L^2(0,T;\f H^{-1}(\Omega)) $ and $\f  f_2 \in L^1(0,T; \f L^2(\Omega))$ such that we may estimate with H\"older's, Young's, and Korn's inequality that
\begin{align*}
\langle \f f , \f v^n \rangle \leq \frac{\nu }{2}\|\nabla \f v^n \|_{L^2(\Omega)}^2 + \frac{C}{2\nu }\| \f f _1 \|_{\f H^{-1}(\Omega)} ^2 + \| \f f_2 \|_{L^2(\Omega)}  \left  (\| \f v^n \|_{L^2(\Omega)}^2+1\right ) \,.
\end{align*}
Inserting this into~\eqref{energydis} allows to apply a Version of Gronwall's Lemma in order to infer that $ \{ \f v^n \}$ is bounded and thus weakly compact in $\mathbb{X}$ such that there exists a $ \f v \in \mathbb{X} $ with
\begin{align*}
\f v^n \rightharpoonup \f v\quad  \text{in } \mathbb{X}\,.
\end{align*}

In order to show the convergence to dissipative solutions, we derive a discrete version of the relative energy inequality. Assume $\vv \in C^1([0,T];\mathbb Y)$. 
Adding~\eqref{energydis} and~\eqref{vdis} tested with $- P_n \vv$ (and integrated in time) and adding and subtracting the term $ \int_0^t\langle P_n \mathcal{A}(\vv) , \f v^n - \vv \rangle \de s $ leads to 
\begin{multline*}
\frac{1}{2} \| \f v - P_n\vv \|_{L^2(\Omega)}^2 \Big |_0^t+ \int_0^t \left ( \nu \| \nabla \f v ^n - \nabla P_n \vv \|_{L^2(\Omega)}^2 + \langle P_n \mathcal{A}(\vv) , \f v^n - \vv \rangle\right )  \de s \\
= \int_0^t\Big(  \left ( ( \f v^n \cdot \nabla ) \f v^n , P_n \vv \right ) + \left ( ( \vv \cdot \nabla) \vv , P_n\f v^n \right ) - \left ( ( \vv \cdot \nabla) \vv , P_n \vv \right ) \Big )  \de s \,. 
\end{multline*}
Note that order of the projection $P_n$ and the Stokes operator may be changed, due to the construction of the discrete spaces. 
By some algebraic transformations, we find
\begin{align}
\begin{split}
\left ( ( \f v^n \cdot \nabla ) \f v^n , P_n \vv \right ) +{}& \left ( ( \vv \cdot \nabla) \vv , P_n\f v^n \right ) - \left ( ( \vv \cdot \nabla) \vv , P_n \vv \right )  \\
={}&\left ( ( (\f v^n- P_n \vv) \cdot \nabla ) (\f v^n- P_n \vv)  , P_n \vv \right ) 
\\&+\left ( ( P_n \vv \cdot \nabla ) (\f v^n- P_n \vv)  , P_n \vv \right ) + \left ( ( \vv \cdot \nabla) \vv , \f v^n - P_n \vv \right ) 
\\={}&\left ( ( (\f v^n- P_n \vv) \cdot \nabla ) (\f v^n- P_n \vv)  , P_n \vv \right ) + \left ( ( \vv \cdot \nabla) \vv - ( P_n \vv \cdot \nabla) P_n \vv , \f v^n - P_n \vv \right ) \,.
\end{split}\label{discalc}
\end{align}

Similar to~\eqref{Kest}, we may estimate
\begin{align*}
\left ( ( (\f v^n- P_n \vv) \cdot \nabla ) (\f v^n- P_n \vv)  , P_n \vv \right ) \leq  \mathcal{W}(\f v^n | P_n \vv) + \mathcal{K}(P_n \vv) \frac{1}{2} \mathcal{R}( \f v | P_n\vv )\,
\end{align*}
For the second term on the right-hand side of~\eqref{discalc}, we observe
\begin{align*}
 \left ( ( \vv \cdot \nabla) \vv - ( P_n \vv \cdot \nabla) P_n \vv , \f v^n - P_n \vv \right ) ={}&  \left ( ( (\vv- P_n \vv) \cdot \nabla) \vv + ( P_n \vv \cdot \nabla) (\vv - P_n \vv) , \f v^n - P_n \vv \right )\,.
\end{align*}

In order to find the discrete version of the relative energy inequality, we apply the  Gronwall lemma,
\begin{multline*}
\mathcal{R}(\f v^n | P_n \vv ) e^{-\int_0^t \mathcal{K}(P_n \vv) \de \tau } + \int_0^t \left ( \mathcal{W}(\f v^n | P_n \vv ) + \langle  \mathcal{A}(P_n \vv ) , \f v^n -P_n \vv \rangle \right ) e^{-\int_s^t \mathcal{K}(P_n \vv)\de \tau } \de s 
\\ \leq \mathcal{R}(P_n \f v_0 | P_n \vv(0))  + \int_0^t \left ( ( (\vv- P_n \vv) \cdot \nabla) \vv  +P_n \vv \cdot \nabla) (\vv - P_n \vv),\f v^n - P_n \vv \right )  e^{-\int_s^t \mathcal{K}(P_n \vv)\de \tau } \de s  \,.
\end{multline*}
The strong convergence of the projection $P_n$, \textit{i.e}.,
\begin{align*}
\| P_n \vv - \vv \|_{L^\infty(0,T;\V)} \ra 0 \quad \text{as }n \ra \infty \quad \text{for all }\vv \in \C^1([0,T];\mathbb Y)\,,
\end{align*} allows to pass to the limit in the discrete relative energy inequality and attain the continuous one~\eqref{relen}. This proves the existence of dissipative solutions and thus, Theorem~\ref{thm:main}.

 \end{proof}
 \begin{remark}[Relative energy inequality for non-solenoidal test functions]\label{rem:proj} Note that the test function in the existence previous proof could also be chosen to have non-vanishing divergence, \textit{i.e}., $ \vv \in \C^1([0,T]; \f H^2 \cap \f H^1_0 )$. Passing to the limit in this formulation, with test functions that are not necessarily divergence-free, we would end up with a slightly different dissipative formulation. 
 
 The usual test function $\vv$ is always replaced by $P\vv$, where $ P$ denotes the Leray-projection onto the divergence-free functions. Furthermore, an additional  term would appear on the right-hand side:
 \begin{align*}
 \int_0^t \left ( ( (\vv- P \vv) \cdot \nabla) \vv+ (P \vv\cdot \nabla) (\vv-P\vv)  ,\f v - P \vv \right )  e^{-\int_s^t \mathcal{K}(P \vv)\de \tau } \de s\,.
 \end{align*}
Note that term $ \vv - P\vv$ only depends on the divergence of $\vv$ and vanishes with vanishing divergence.   This formulation may be more interesting from the numerical point of view, since there it can often only be guaranteed that the test functions are divergence free in the discrete sense and not in the continuous sense.

 \end{remark}
 \begin{remark}
 It is worth noticing that there is no stability property of the projection $P_n$ onto the discrete spaces needed. Usually $P_n$ has to be stable as a mapping on $\mathbb{Y}$ (for instance) in order to infer estimates on the time derivatives, which then gives by some version of the Aubin--Lions theorem strong convergence. In the end such strong convergence is needed to pass to the limit in the nonlinear terms. 
 Since no strong convergence is needed to pass to the limit in the dissipative formulation, the stability of the projection is not needed in our case. 
 \end{remark}

\section{ Euler equations\label{sec:eul}}
The general result of Theorem~\ref{thm:gen} is applied to the Euler equations and we discuss also the possibility of the measure-valued maximal dissipative solutions. 
\subsection{Well-posedness of maximal dissipative solutions}
A simple adaptation leads to the existence result for the Euler equations. 
First we recall the Euler equations for the sake of completeness. 
\begin{align*}
\t \f v + ( \f v \cdot \nabla ) \f v + \nabla p = \f f , \qquad \di \f v ={}& 0 \qquad \text{in }\Omega \times (0,T)\,,\\
\f v (0) ={}& \f v_0 \qquad \text{in } \Omega \,,\\
\f v \cdot \f n = {}& 0 \qquad \text{on }\partial \Omega\times (0,T) \,.
\end{align*}

For the Euler equations, the underlying spaces change to $\mathbb{X}:=L^\infty(0,T;L^2_{\sigma} (\Omega))$ for the solutions and $\mathbb{Y}:= \f H^1 \cap \Ha $ for the test functions. 
The definitions of the relative energy and the relative dissipation, as well as the solution operator are given as in~\eqref{def:nav} with $\nu=0$. The regularity measure changes to $\mathcal{K}(\vv)=\| (\nabla \vv)_{\sym,-}\|_{L^\infty(\Omega)}$, where $(\nabla \vv)_{\sym,-}$ denotes the negative part of the symmetrized gradient of $\vv$, \textit{i.e.}, $$ \| (\nabla \vv)_{\sym,-}\|_{L^\infty(\Omega)}= \left \| \left  ( \sup _{|\f a| = 1} - \left ( \f a ^T \cdot (\nabla \vv)_{\sym} \f a\right ) \right ) \right \|_{L^\infty(\Omega)}  \,.$$

We recall an existence result on dissipative solutions for the Euler equation by Pierre-Louis Lions~\cite[Sec~4.4]{lionsfluid}:
\begin{theorem}[Existence of dissipative solutions]\label{lions} 
 Let $ \Omega \subset \R^d$ for $d\geq 2$ a Lipschitz domain.
 Let $\mathcal{R}$, $\mathcal{W}$, and $\mathcal{A}$ be given as in~\eqref{def:nav} with $\nu=0$ such that $ \mathcal{W}\equiv 0$ and let $\mathcal{K}$ be given by $\mathcal{K}(\vv)=\| (\nabla \vv)_{\sym,-}\|_{L^\infty(\Omega)}$. Then there exists at least one function $\f v\in L^\infty(0,T;L^2_{\sigma}(\Omega))$ to every $\f v_0\in L^2_{\sigma} (\Omega)$ and $\f f \in L^2 (0,T; L^2(\Omega))$ such that
 \begin{align}
 \mathcal{R}(\f v (t) | \vv(t)) \leq \mathcal{R}(\f v_0, \vv(0)) e^{\int_0^t \mathcal{K}(\vv)\de s } + \int_0^t \left \langle\mathcal{A}(\vv), \vv-\f v\right \rangle     e^{\int_ s^t \mathcal{K}(\vv)\de \tau }\, \quad \text{for all } \vv \in \C^1 ([0,T]; \mathbb{Y})\,.\label{lionsdiss}
\end{align}

 \end{theorem}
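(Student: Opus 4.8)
This is Lions's existence result~\cite[Sec.~4.4]{lionsfluid}; I would reprove it by a vanishing--viscosity argument carried out on the level of the relative energy, in the spirit of the proofs of Proposition~\ref{prop:diss} and Theorem~\ref{thm:main}. For $\varepsilon>0$ let $\f v^\varepsilon$ be a weak solution of the Navier--Stokes system with viscosity $\nu=\varepsilon$, right--hand side $\f f$ and initial datum $\f v_0$, subject to a boundary condition compatible with the Euler slip condition $\f v\cdot\f n=0$ (so that every $\vv\in\f H^1\cap\Ha$ is admissible in its weak formulation); such solutions exist for all $d\ge2$ and satisfy the corresponding energy inequality. Together with $\f f\in L^2(0,T;L^2(\Omega))$, Young's and Gr\"onwall's inequalities give $\|\f v^\varepsilon\|_{L^\infty(0,T;\Ha)}\le C$ and $\varepsilon\int_0^T\|\nabla\f v^\varepsilon\|_{L^2(\Omega)}^2\,\de t\le C$ uniformly in $\varepsilon$, so that, along a subsequence, $\f v^\varepsilon\stackrel{*}{\rightharpoonup}\f v$ in $L^\infty(0,T;L^2_\sigma(\Omega))$.

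Next I would derive a relative energy inequality for $\f v^\varepsilon$ by combining, exactly as in the proof of Proposition~\ref{prop:diss}, the energy inequality for $\f v^\varepsilon$, the weak formulation of the viscous system tested by $\vv$, and the two identities obtained by testing the \emph{Euler} solution operator $\mathcal{A}(\vv)$ from~\eqref{A} with $\nu=0$ by $\vv$ and by $\f v^\varepsilon$. The only structural change relative to~\eqref{secrelen} concerns the convective terms: instead of the Gagliardo--Nirenberg estimate~\eqref{Kest}, the skew--symmetry of the convective form together with $\di\vv=\di\f v^\varepsilon=0$ and $\vv\cdot\f n=\f v^\varepsilon\cdot\f n=0$ on $\partial\Omega$ yields
\begin{multline*}
\int_\Omega\big((\f v^\varepsilon\cdot\nabla)\f v^\varepsilon\cdot\vv+(\vv\cdot\nabla)\vv\cdot\f v^\varepsilon\big)\,\de\f x
=\int_\Omega\big((\f v^\varepsilon-\vv)\cdot\nabla\big)(\f v^\varepsilon-\vv)\cdot\vv\,\de\f x\\
=-\int_\Omega(\f v^\varepsilon-\vv)\otimes(\f v^\varepsilon-\vv):(\nabla\vv)_{\sym}\,\de\f x
\le\|(\nabla\vv)_{\sym,-}\|_{L^\infty(\Omega)}\,\|\f v^\varepsilon-\vv\|_{L^2(\Omega)}^2\,.
\end{multline*}
This is the only point at which the choice $\mathcal{K}(\vv)=\|(\nabla\vv)_{\sym,-}\|_{L^\infty(\Omega)}$ is used, and it explains why the relative dissipation is not needed here ($\mathcal{W}\equiv0$): the convective error is absorbed entirely into $\mathcal{R}(\f v^\varepsilon|\vv)$, while the viscous dissipation $\varepsilon\int_0^t\|\nabla\f v^\varepsilon\|_{L^2(\Omega)}^2\,\de s$ is nonnegative and, after completing the square with the viscous cross term, leaves only a remainder of order $\varepsilon$ depending on $\vv$. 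Multiplying by $e^{-\int_0^t\mathcal{K}(\vv)\,\de s}$, invoking Gr\"onwall's lemma, and reformulating with Lemma~\ref{lem:invar} one obtains
\begin{multline*}
-\int_0^T\varphi'(t)\,\mathcal{R}(\f v^\varepsilon|\vv)\,e^{-\int_0^t\mathcal{K}(\vv)\de s}\,\de t-\varphi(0)\,\mathcal{R}(\f v_0|\vv(0))\\
+\int_0^T\varphi(t)\,\langle\mathcal{A}(\vv),\vv-\f v^\varepsilon\rangle\,e^{-\int_0^t\mathcal{K}(\vv)\de s}\,\de t\;\le\;C(\vv)\,\varepsilon
\end{multline*}
for all admissible $\varphi$ (i.e.~$\varphi\in\C^1([0,T])$, $\varphi\ge0$, $\varphi'\le0$).

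Finally I would let $\varepsilon\to0$. Since $\f v\mapsto\mathcal{R}(\f v|\vv)=\tfrac12\|\f v-\vv\|_{L^2(\Omega)}^2$ is convex and strongly continuous, hence weakly lower semicontinuous, and $\f v\mapsto\langle\mathcal{A}(\vv),\vv-\f v\rangle$ is affine and sequentially weakly-$*$ continuous along $\f v^\varepsilon\stackrel{*}{\rightharpoonup}\f v$ in $L^\infty(0,T;L^2_\sigma)$, the left--hand side of the displayed inequality is weakly-$*$ lower semicontinuous and its right--hand side tends to $0$; thus the inequality survives the limit for all $\vv\in\C^1([0,T];\mathbb Y)$, and Lemma~\ref{lem:invar} turns it into the pointwise--in--time form~\eqref{lionsdiss}. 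As in the proof of Theorem~\ref{thm:main}, \emph{no} strong compactness of $\f v^\varepsilon$ is used: the convective nonlinearity never appears by itself but only through the convex quadratic $\mathcal{R}(\f v^\varepsilon|\vv)$, which after the Gr\"onwall reduction sits on the lower--semicontinuous side. I expect the main technical point not to be the limit but the set--up of the approximation, namely choosing the viscous regularization so that its boundary condition is compatible with $\f v\cdot\f n=0$ (Navier slip, or a domain without boundary) and so that the $\varepsilon$--remainder is genuinely $o(1)$ uniformly over admissible $\vv$; an alternative is to replace the vanishing--viscosity step by a Galerkin scheme on a smooth solenoidal basis as in the proof of Theorem~\ref{thm:main}, at the cost of having to verify $\mathcal{K}(P_n\vv)\to\mathcal{K}(\vv)$.
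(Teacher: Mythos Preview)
The paper does not prove Theorem~\ref{lions} at all: it is merely recalled from Lions~\cite[Sec.~4.4]{lionsfluid} and then used as input for Theorem~\ref{thm:euler}. Your vanishing--viscosity argument is precisely Lions's original proof, and the key estimate you isolate---rewriting the convective remainder as $-\int_\Omega(\f v^\varepsilon-\vv)\otimes(\f v^\varepsilon-\vv):(\nabla\vv)_{\sym}\,\de\f x$ and bounding it by $2\|(\nabla\vv)_{\sym,-}\|_{L^\infty(\Omega)}\,\mathcal{R}(\f v^\varepsilon|\vv)$ via the positive semidefiniteness of the rank--one tensor---is exactly what singles out this choice of $\mathcal{K}$ and makes $\mathcal{W}$ unnecessary.

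One genuine subtlety you already flag deserves emphasis: the boundary condition. With homogeneous Dirichlet data on the viscous approximation and only $\vv\cdot\f n=0$ for the test function, the cross term $\varepsilon\int_\Omega\nabla\f v^\varepsilon:\nabla\vv\,\de\f x$ cannot in general be integrated by parts to produce a clean $O(\varepsilon)$ remainder (one picks up $\varepsilon^{1/2}\|\nabla\f v^\varepsilon\|_{L^2}\cdot\varepsilon^{1/2}\|\nabla\vv\|_{L^2}=O(\varepsilon^{1/2})$, which is still enough, but the boundary--layer issue is real if one wants the limit $\f v$ itself to satisfy $\f v\cdot\f n=0$). Lions circumvents this by working on $\R^d$ or $\mathbb{T}^d$, or with Navier slip; your remark that this is ``the main technical point'' is accurate. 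The Galerkin alternative you mention also works and is closer in spirit to the paper's proof of Theorem~\ref{thm:main}, but then one must check $\mathcal{K}(P_n\vv)\to\mathcal{K}(\vv)$ in $L^1(0,T)$, which requires the projection to be stable in $W^{1,\infty}$---a nontrivial constraint on the basis.
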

\begin{remark}
Pierre Louis Lions also showed that $\f v$ enjoys the regularity $\f v \in \C_w([0,T];L^2(\Omega))$. We omit this here, since the regularity is not stable under the convergence with respect to $\mathbb{X}$. 
\end{remark} 
 We are now ready to state the existence  result for the Euler equations. 

\begin{theorem}\label{thm:euler}
%
Let the assumptions of Theorem~\ref{lions} be fulfilled. 
 Then there exists a unique maximal dissipative solution $\f v \in \mathbb{X}$ to every $\f v_0\in L^2_{\sigma} (\Omega)$ and $\f f \in L^2 (0,T; L^2(\Omega))$ 
  in the sense of Definition~\ref{def:maxdiss} and the maximal dissipative solution depends continuously on the initial datum and the right-hand side in the sense
$( \f v ^n_0 , \f f^n ) \ra ( \f v _0 , \f  f ) $ in $ \Ha \times L^1 (0,T; L^2 (\Omega))$, then  to every $n\in \N$ there exists a maximal dissipative solution $\f v^n \in \mathbb{X}$ and it holds 
$ \f v^n \stackrel{*}{\rightharpoonup} \f v $ in $\mathbb X$ . 
 \end{theorem}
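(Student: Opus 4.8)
The approach is to recognize the Euler system as the special case $\f K\equiv0$ (hence $\mathcal{W}\equiv0$) of the abstract framework of Section~\ref{sec:gen}, so that the whole statement follows from Theorem~\ref{thm:gen} once the one remaining ingredient of that theorem, the non-emptiness of the set of dissipative solutions, is supplied by Theorem~\ref{lions}. The proof is therefore parallel to that of Theorem~\ref{thm:main}, except that the Galerkin construction used there is replaced by the ready-made existence result of Lions. Concretely, I would put $\mathbb{V}=\Ha$ and $\mathcal{E}(\f v)=\tfrac12\|\f v\|_{L^2(\Omega)}^2$, so that $D\mathcal{E}(\f v)=\f v$ is linear and $\mathcal{E}$ is quadratic; $\f L$ is the skew-symmetric convective term, for which $\langle D\mathcal{E}(\f v),\f L(\f v)D\mathcal{E}(\f v)\rangle=-\int_\Omega(\f v\cdot\nabla)\f v\cdot\f v\,\de\f x=0$ by the divergence constraint, and $\f K\equiv0$. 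Then Assumption~\textbf{(A1)} holds; Assumption~\textbf{(A3)} holds with $\mathbb{Z}=\mathbb{V}^*=\Ha$ because $\|D\mathcal{E}(\f v)\|_{\Ha}=\|\f v\|_{L^2(\Omega)}\le\mathcal{E}(\f v)+1$ by Young's inequality; and Assumption~\textbf{(A4)} holds trivially, since $\f v\mapsto\mathcal{W}(\f v|\vv)\equiv0$ is convex and weakly lower semicontinuous, while $\f v\mapsto\mathcal{R}(\f v|\vv)=\tfrac12\|\f v-\vv\|_{L^2(\Omega)}^2$ is convex and weakly lower semicontinuous by \textbf{(A1)}. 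In the case $\f K\equiv0$ the natural state space is $\mathbb{X}=L^\infty(0,T;\Ha)$, which matches the statement, and the admissible right-hand sides are those in $L^1(0,T;\Ha)$; in particular every $\f f\in L^2(0,T;L^2(\Omega))$ is admissible once identified with its solenoidal part, using $T<\infty$.

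Next I would invoke Theorem~\ref{lions}, which for every $\f v_0\in\Ha$ and $\f f\in L^2(0,T;L^2(\Omega))$ produces a function $\f v\in L^\infty(0,T;\Ha)$ satisfying~\eqref{lionsdiss}. Because $\mathcal{W}\equiv0$ and $D\mathcal{E}=\mathrm{id}$, moving the $\mathcal{A}$-term in~\eqref{relen} to the right-hand side shows that~\eqref{lionsdiss} is verbatim the relative energy inequality~\eqref{relen}; hence such a $\f v$ is a dissipative solution in the sense of Definition~\ref{def:diss}, and a dissipative solution exists for every datum in the required class. With \textbf{(A1)}, \textbf{(A3)}, \textbf{(A4)} verified and a dissipative solution at hand for every $(\f v_0,\f f)$, Theorem~\ref{thm:gen} applies directly and gives everything at once: the set of dissipative solutions is closed and convex; the strictly convex, weakly lower semicontinuous functional $\f v\mapsto\tfrac12\int_0^T\mathcal{E}(\f v(t))\,\de t$ attains its infimum on this set at a unique point, the maximal dissipative solution; and $(\f v_0^n,\f f^n)\to(\f v_0,\f f)$ in $\Ha\times L^1(0,T;L^2(\Omega))$ implies $\f v^n\stackrel{*}{\rightharpoonup}\f v$ in $\mathbb{X}$, where $\f v^n$ is the maximal dissipative solution for $(\f v_0^n,\f f^n)$ — these exist for each $n$ by Theorem~\ref{lions}, provided each $\f f^n\in L^2(0,T;L^2(\Omega))$.

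I expect the only delicate point — and the main obstacle — to be the identification carried out above: one must check carefully that the test-function class $\C^1([0,T];\mathbb{Y})$ and the Gronwall weight $e^{\int_s^t\mathcal{K}(\vv)\,\de\tau}$ with $\mathcal{K}(\vv)=\|(\nabla\vv)_{\sym,-}\|_{L^\infty(\Omega)}$ agree in~\eqref{relen} and~\eqref{lionsdiss}, and that the apparent sign difference in the $\mathcal{A}$-terms is merely a rearrangement; note that test functions $\vv$ with $\mathcal{K}(\vv)\notin L^\infty(0,T)$ render both inequalities trivially true and so do not change the solution set. Beyond this bookkeeping nothing new is required; in particular no compactness other than the weak-$*$ compactness coming from the energy bound is used, precisely because the dissipative formulation is convex in $\f v$ and the only $\f v$-dependence outside $\mathcal{R}$ enters linearly through $\langle\mathcal{A}(\vv),\vv-\f v\rangle$, for which weak-$*$ convergence in $\mathbb{X}$ suffices to pass to the limit.
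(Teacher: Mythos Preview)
Your proposal is correct and follows precisely the paper's own approach: the paper's proof of Theorem~\ref{thm:euler} consists of the single observation that existence of dissipative solutions is furnished by Theorem~\ref{lions}, after which everything follows from Theorem~\ref{thm:gen}. You have in fact supplied more detail than the paper does, carefully verifying \textbf{(A1)}, \textbf{(A3)}, \textbf{(A4)} and the identification of~\eqref{lionsdiss} with~\eqref{relen}, all of which the paper leaves implicit.
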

 Since the existence of dissipative solutions is already know due to Theorem~\ref{lions}, the above theorem is a consequence of Theorem~\ref{thm:gen}.
 Additionally, we provide a conditional continuous dependence result similar to Proposition~\ref{prop:cont}. 
\begin{proposition}\label{prop:euler}
 Assume that there exists a unique weak solution $\f v\in \mathbb{X}$ to the Euler equations with $\f v \in L^1(0,T;W^{1,\infty}(\Omega))$ to given right-hand side $\f f\in L^1(0,T;\f L^2 (\Omega))$ and initial datum $\f v_0\in \Ha$. For any $\f f^1\in L^2(0,T;\f L^2 (\Omega))$ and $\f v^1 _0\in \Ha$ the associated  dissipative solution $ \f v^1$ fulfills the estimate 
 \begin{align*}
 \mathcal{R}(\f v^1(t) | \f v(t)) \leq \mathcal{R}(\f v^1_0 | \f v_0 ) e^{\int_0^t (\mathcal{K}(\f v)+1 )  \de s } + \frac{1}{2}\int_0^t \| \f f- \f f^1 \|_{\f L^2(\Omega)}^2e^{\int_s^t(\mathcal{K}(\f v)+1)  \de \tau}  \de s
 \end{align*}
 for a.e.~$t\in(0,T) $.
 \end{proposition}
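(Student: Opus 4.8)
The plan is to mimic the proof of Proposition~\ref{prop:cont}, the essential difference being that in the inviscid case $\mathcal{W}\equiv 0$, so the mismatch of the right-hand sides cannot be absorbed into a dissipation term and has to be absorbed into the relative energy $\mathcal{R}$ itself; this is precisely the reason the exponent in the assertion carries the extra summand $1$.

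\textbf{Step 1 (insert the regular weak solution as a test function).} Since $\f v$ is the unique weak solution and $\f v\in L^1(0,T;W^{1,\infty}(\Omega))$, the regularity measure $\mathcal{K}(\f v)=\|(\nabla\f v)_{\sym,-}\|_{L^\infty(\Omega)}$ lies in $L^1(0,T)$ and $(\f v\cdot\nabla)\f v\in L^1(0,T;\f L^2(\Omega))$. As $\f v$ is not itself of class $\C^1([0,T];\mathbb{Y})$, I would first regularise it (mollification in time, using the equation to control $\t\f v$) to obtain a sequence $\vv_k\in\C^1([0,T];\mathbb{Y})$ with $\vv_k\to\f v$ strongly enough that $\mathcal{K}(\vv_k)\to\mathcal{K}(\f v)$ in $L^1(0,T)$, $\vv_k(0)\to\f v_0$ in $\Ha$, $\vv_k\to\f v$ in $\C([0,T];\Ha)$ along a subsequence for a.e.~$t$, and $\mathcal{A}_{\f f^1}(\vv_k)$ converges to $\mathcal{A}_{\f f^1}(\f v)$ in a way compatible with the convergence of $\vv_k-\f v^1$. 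Plugging $\vv=\vv_k$ into the dissipative inequality~\eqref{lionsdiss} for the dissipative solution $\f v^1$ and letting $k\to\infty$ yields
\begin{align*}
\mathcal{R}(\f v^1(t)\,|\,\f v(t))\leq\mathcal{R}(\f v^1_0\,|\,\f v_0)\,e^{\int_0^t\mathcal{K}(\f v)\de s}+\int_0^t\left\langle\mathcal{A}_{\f f^1}(\f v)\,,\,\f v-\f v^1\right\rangle e^{\int_s^t\mathcal{K}(\f v)\de\tau}\de s
\end{align*}
for a.e.~$t\in(0,T)$, where $\mathcal{A}_{\f f^1}$ denotes the solution operator from~\eqref{def:nav} with $\nu=0$ and $\f f$ replaced by $\f f^1$, and I used $\f v(0)=\f v_0$.

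\textbf{Step 2 (data gap and Gronwall).} Because $\f v$ is a weak solution of the Euler equations with right-hand side $\f f$ and $\f v-\f v^1$ is solenoidal, $\langle\mathcal{A}_{\f f}(\f v),\f v-\f v^1\rangle=0$, so $\mathcal{A}_{\f f^1}(\f v)$ may be replaced by $\f f-\f f^1$ in the pairing; Young's inequality then gives
\begin{align*}
\left\langle\f f-\f f^1\,,\,\f v-\f v^1\right\rangle\leq\tfrac12\|\f f-\f f^1\|_{\f L^2(\Omega)}^2+\tfrac12\|\f v-\f v^1\|_{\f L^2(\Omega)}^2=\tfrac12\|\f f-\f f^1\|_{\f L^2(\Omega)}^2+\mathcal{R}(\f v^1\,|\,\f v)\,.
\end{align*}
Substituting this into the inequality of Step~1 and applying Gronwall's lemma — which absorbs the additional $\mathcal{R}(\f v^1\,|\,\f v)$ term, turning the rate $\mathcal{K}(\f v)$ into $\mathcal{K}(\f v)+1$, exactly as in the general argument of Theorem~\ref{thm:gen} — produces precisely the claimed estimate.

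The elementary parts (Young, Gronwall) are routine; the one delicate point is the admissibility of the irregular test function in Step~1, i.e.~justifying $\langle\mathcal{A}_{\f f}(\vv_k),\vv_k-\f v^1\rangle\to\langle\mathcal{A}_{\f f}(\f v),\f v-\f v^1\rangle=0$ along the regularisation, since this expression contains the convective term $(\vv_k\cdot\nabla)\vv_k$. The hypothesis $\f v\in L^1(0,T;W^{1,\infty}(\Omega))$ is exactly what makes this pass: combined with the uniform energy bound on $\f v^1$, it controls the convective term, so only weak convergence in the energy spaces is needed on $\f v^1$ while $\vv_k\to\f v$ strongly. This is entirely analogous to the approximation used in the existence proof behind Theorem~\ref{lions} and invoked in the proof of Proposition~\ref{prop:cont}.
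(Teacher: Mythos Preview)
Your proof is correct and follows essentially the same route as the paper: insert the regular solution $\f v$ as test function (via approximation), use $\mathcal{A}_{\f f}(\f v)=0$ to reduce the solution operator to the forcing mismatch $\f f-\f f^1$, estimate by Young's inequality, and absorb the resulting $\mathcal{R}$-term into the exponential rate. The only cosmetic difference is that you invoke Gronwall directly, whereas the paper substitutes $\phi=\varphi e^{-t}$ into the weak formulation and then applies Lemma~\ref{lem:invar}; these are equivalent manoeuvres, and your discussion of the regularisation in Step~1 is in fact more explicit than the paper's ``or approximate it appropriately''.
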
 
\begin{proof}[Proof of Proposition~\ref{prop:euler}]
As in the proof of Proposition~\ref{prop:cont}, we get~\eqref{continuous}. 
We continue by estimating 
\begin{multline*}
\left \langle \mathcal{A}_{\f f^1}(\f v (t))-\mathcal{A}_{\f f}(\f v (t)) ,\f v^1 (t)-\f v(t)  \right \rangle = \left \langle  \f f(t) - \f f^1 (t),\f v^1 (t)-\f v(t)  \right \rangle \\ \geq - \frac{1}{2} \|  \f v ^1 (t)-   \f v (t)\| _{L^2(\Omega)}^2 - \frac{1}{2} \| \f f (t)- \f f^1(t)\|_{L^2(\Omega)} ^2 \,. 
\end{multline*}
Inserting this into~\eqref{continuous} for the Euler equations and choosing $\phi=\varphi e^{-t}$, we find
\begin{align*}
0 \geq  \Big(& -\int_0^T \varphi'(t) \mathcal{R}(\f v^1(t)|\f v (t)) e^{-\int_0^t (\mathcal{K}(\f v(s))+1)\de s }  \de t - \varphi(0) \mathcal{R}(\f v^1 _0,\f v_0)  \\&{}+ \int_0^T \varphi(t) (
\left \langle \mathcal{A}_{\f f^1}(\f v (t)) ,\f v^1 (t)-\f v(t)  \right \rangle e^{-\int_0^t(\mathcal{K}(\f v (s))+1)\de s }   \de t\Big)\,.
\end{align*}
Applying Lemma~\ref{lem:invar} and multiplying by~$e^{\int_0^t\mathcal{K}(\f v (s)+1)\de s } $, implies the assertion. 
\end{proof}

\subsection{Measure valued formulation}
In this section, we want to define a measure-valued solution for the Euler equations and go a similar step into the uniqueness of solutions as in the case of dissipative solutions. 
\begin{definition}[measure-valued solution]\label{def:meas}
A function $\f v \in L^\infty(0,T;\f L^2_{\sigma}(\Omega)  )$ is called a measure-val\-ued solution to the Euler equations, if there exists a measure $ \f m \in  L^\infty (0,T;\mathcal{M}(\ov \Omega ; \mathbb{M}^{d\times d}_{\sym,+})$ such that
the equation is fulfilled in a measure-valued sense, \textit{i.e.}, 
\begin{align}
- \int_0^T \langle \f v , \partial_t \f \varphi \rangle + \left ( \f v \otimes \f v; \nabla \f \varphi \right )  + \langle \f m ; \nabla \f \varphi \rangle \de t + \left ( \f v _0 , \f \varphi(0)\right ) =  \int_0^T ( \f f , \f \varphi ) \de t \label{measeq}
\end{align}
for all $ \f \varphi \in \C_0^\infty([0,T)) \otimes \mathcal{V}$ and the energy inequality holds
\begin{align}
\frac{1}{2}\| \f v (t) \|^2_{\f L^2(\Omega)} + \frac{1}{2}\langle  \f m(t) ; I \rangle  \leq \frac{1}{2} \| \f v_0 \|_{\f L^2(\Omega)}^2 + \int_0^t \langle \f f , \f v \rangle \de s  \,.\label{measeneq}
\end{align}

\end{definition}

\begin{remark}
Measure valued solutions to the Euler equations are known to enjoy the weak strong uniqueness property (see~\cite{weakstrongeuler}). 
If $\f m \equiv 0 $, they fulfill the equation in the weak sense. 
Note that 
the dual pairing of the measure and a continuous function is defined via 
$ \langle \f m , \f A \rangle := \langle \f m , \f A \rangle_{\mathcal{M}(\ov \Omega ; \mathbb{M}^{d\times d}), \C(\ov \Omega ; \mathbb{M}^{d\times d})} = \int_{\ov \Omega } \f A : \de \f m $.
\end{remark}
\begin{remark}
This formulation differs slightly form the usual formulation by Di-Perna--Majda (see~\cite{dipernamajda} or~\cite{weakstrongeuler}). 
Here, we just add and subtract the term $\left ( \f v \otimes \f v; \nabla \f \varphi \right ) $, which results in a redefinition of the measure $\f m$ (compare~\cite{maxbreit}). 

\end{remark}
\begin{definition}[maximal dissipative measure-valued solutions]\label{def:maxmeas}
A function $\f u$ is called a maximal dissipative solution, if $ \f v \in \mathbb{X}$   is the solution of the following optimization problem
\begin{align*}
\min_{\f v \in \mathbb{X}} \frac{1}{2} \int_0^T\|  \f v ( t)\|_{L^2(\Omega)}^2  \de t \quad \text{such that $\f v $ is a measure-valued solution according to Definition~\ref{def:diss}. }
\end{align*}
\end{definition}

\begin{theorem}
 Let $\f v_0\in L^2_{\sigma} (\Omega)$, and $\f f \in L^1 (0,T; L^2(\Omega))$ be given.  Then there exists a measure-valued solution in the sense of Definition~\ref{def:meas}. The solution set is convex and closed such that there exists a unique maximal dissipative solution according to Definition~\ref{def:maxmeas}. 
\end{theorem}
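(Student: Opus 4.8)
The plan is to follow the three-step scheme of the proof of Theorem~\ref{thm:gen} (solution set closed, solution set convex, unique minimizer of the $L^2$-functional), preceded by the construction of a single measure-valued solution. For \emph{existence} I would argue by compactness: take $\{\f v^n\}$ to be Galerkin approximations of the Euler equations -- or, equivalently, Leray--Hopf solutions of~\eqref{nav} with viscosities $\nu_n\searrow0$ -- so that each $\f v^n$ satisfies the natural energy inequality with datum $P_n\f v_0\to\f v_0$. The energy estimate gives a uniform bound in $\mathbb X=L^\infty(0,T;\Ha)$, hence $\f v^n\stackrel{*}{\rightharpoonup}\f v$ in $\mathbb X$ along a subsequence, while $\f v^n\o\f v^n$ is bounded in $L^\infty(0,T;L^1(\Omega;\mathbb M^{d\times d}_{\sym,+}))$ and converges weakly-$*$ in $L^\infty(0,T;\M(\ov\Omega;\mathbb M^{d\times d}_{\sym,+}))$ to some $\ov{\f v\o\f v}$. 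I would then set $\f m:=\ov{\f v\o\f v}-\f v\o\f v$; testing against $\psi(t)\xi(\f x)\,\f a\o\f a$ with $\psi,\xi\ge0$ and using the weak lower semicontinuity of $\f w\mapsto\|\f w\cdot\f a\|_{L^2(\Omega)}^2$ shows $\f a^{T}\f m\f a\ge0$, so $\f m\in L^\infty(0,T;\M(\ov\Omega;\mathbb M^{d\times d}_{\sym,+}))$. Passing to the limit in the (approximate) weak formulation -- the viscous term vanishing because $\sqrt{\nu_n}\,\nabla\f v^n$ stays bounded in $L^2$ -- yields~\eqref{measeq}, and passing to the limit in the energy identity, rewritten in the integrated form of Lemma~\ref{lem:invar} so that only weak lower semicontinuity of $t\mapsto\tfrac12\langle\f v\o\f v+\f m;I\rangle$ is used, yields~\eqref{measeneq}.

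For \emph{convexity}, let $(\f v_1,\f m_1)$ and $(\f v_2,\f m_2)$ be measure-valued solutions (necessarily with the same datum $\f v_0$), $\lambda\in[0,1]$, $\f v_\lambda:=\lambda\f v_1+(1-\lambda)\f v_2$. The elementary identity
\[
\lambda\,\f v_1\o\f v_1+(1-\lambda)\,\f v_2\o\f v_2-\f v_\lambda\o\f v_\lambda=\lambda(1-\lambda)\,(\f v_1-\f v_2)\o(\f v_1-\f v_2)\ge0
\]
suggests setting $\f m_\lambda:=\lambda\f m_1+(1-\lambda)\f m_2+\lambda(1-\lambda)(\f v_1-\f v_2)\o(\f v_1-\f v_2)$, which is again a positive-semidefinite $L^\infty(0,T;\M)$ field and satisfies $\f v_\lambda\o\f v_\lambda+\f m_\lambda=\lambda(\f v_1\o\f v_1+\f m_1)+(1-\lambda)(\f v_2\o\f v_2+\f m_2)$. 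Inserting this into~\eqref{measeq} gives the equation by linearity, and into~\eqref{measeneq} gives the energy inequality because its left-hand side for $(\f v_\lambda,\f m_\lambda)$ is exactly the $\lambda$-combination of the left-hand sides for $(\f v_i,\f m_i)$, each bounded by the $\lambda$-combination of the right-hand sides, which (since $\f v_0$ is common) equals the right-hand side for $\f v_\lambda$.

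For \emph{closedness and the conclusion}, if $(\f v^n,\f m^n)$ are measure-valued solutions with $\f v^n\stackrel{*}{\rightharpoonup}\f v$ in $\mathbb X$, then~\eqref{measeneq} bounds $\langle\f m^n(t);I\rangle$, hence the total variation of $\f m^n(t)$, uniformly; extracting subsequences, $\f m^n\stackrel{*}{\rightharpoonup}\tilde{\f m}$ and $\f v^n\o\f v^n\stackrel{*}{\rightharpoonup}\ov{\f v\o\f v}$ in $L^\infty(0,T;\M(\ov\Omega;\mathbb M^{d\times d}_{\sym,+}))$, and I would set $\f m:=\tilde{\f m}+(\ov{\f v\o\f v}-\f v\o\f v)$, positive semidefinite as a sum of positive-semidefinite measures. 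Since $\f v^n\o\f v^n+\f m^n\stackrel{*}{\rightharpoonup}\f v\o\f v+\f m$, one passes to the limit in~\eqref{measeq}, and in~\eqref{measeneq} again via the integrated reformulation and weak lower semicontinuity, so $(\f v,\f m)$ is a measure-valued solution; the set is moreover bounded in $\mathbb X$, since~\eqref{measeneq} together with a Gronwall argument gives $\|\f v\|_{L^\infty(0,T;L^2(\Omega))}\le\|\f v_0\|_{L^2(\Omega)}+\|\f f\|_{L^1(0,T;L^2(\Omega))}$ for every measure-valued solution. The functional $\f v\mapsto\tfrac12\intte{\|\f v(t)\|_{L^2(\Omega)}^2}$ is strictly convex and weakly-$*$ lower semicontinuous, hence attains a unique minimizer on this bounded, closed, convex set~\cite[Prop.~1.3.1]{RoubicekMeasure}, which is the unique maximal dissipative measure-valued solution of Definition~\ref{def:maxmeas}.

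The point I expect to be the main obstacle is, as in the $L^2$-valued theory, the treatment of the measure at a fixed time in~\eqref{measeneq}: the weak-$*$ convergences above control $\f m^n$ and $\f v^n\o\f v^n$ only after integration in time, so the a.e.-in-$t$ energy inequality has to be handled in the equivalent form of Lemma~\ref{lem:invar} (tested against nonnegative nonincreasing temporal weights) before the limit passage, and one must verify the measurability and lower semicontinuity of $t\mapsto\langle\f m(t);I\rangle$. The positive semidefiniteness of every defect measure -- which is what makes both the energy bound and the estimate $\|\f m(t)\|_{\M}\sim\langle\f m(t);I\rangle$ work -- is the other point requiring care, but it is secured throughout by the convexity of $\f w\mapsto\f w\o\f w$ together with weak lower semicontinuity.
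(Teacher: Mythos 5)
Your proposal follows essentially the same route as the paper's proof: existence by a vanishing-viscosity/compactness argument with the defect measure $\f m$ obtained as the weak-$*$ limit of $\f v^n\otimes\f v^n-\f v\otimes\f v$ and its positive semidefiniteness from weak lower semicontinuity; convexity via the identity producing the augmented measure $\lambda\f m_1+(1-\lambda)\f m_2+\lambda(1-\lambda)(\f v_1-\f v_2)\otimes(\f v_1-\f v_2)$; closedness by adding the new defect measure to the weak-$*$ limit of $\f m^n$ and passing to the limit in the energy inequality through the time-integrated form of Lemma~\ref{lem:invar}; and uniqueness of the minimizer by strict convexity on the closed convex set. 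This matches the paper's argument step for step, with your remarks on the fixed-time energy inequality being precisely the point the paper also resolves via Lemma~\ref{lem:invar}.
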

\begin{proof}
The proof is divided in several steps: \textit{Existence of solutions.}
The existence of measure-valued solution was already proven in~\cite{dipernamajda} (see also~\cite{weakstrongeuler}). 
The existence can be proven by the usual vanishing viscosity approach, where the measure valued solution to the Euler equations is the limit of the weak solutions to the Navier--Stokes equations. 
For a suitable approximation $\f v^n\ra \f v$ in $L^\infty(0,T;\Ha)$, the defect measure is defined as 
\begin{align*}
\int_0^T \langle \f m ; \f A   \rangle \de t = \lim _{n\ra \infty} \int_0^T \int_\Omega \left ( \f v^n \otimes \f v^n - \f v \otimes \f v \right ) : \f A  \de \f x \de t 
\end{align*}
 for all $ \f A \in \C_c^\infty([0,T)) \otimes \C (\ov  \Omega ; \mathbb M^{d\times d})$. The measure $\f m $ takes values in the  symmetric matrices since it is the limit of symmetric matrices. 
Due to the weakly lower semi-continuity of convex functionals~\cite{ioffe}, $\f m$ takes also values in the set of positive definite matrices, \textit{i.e.,}
\begin{align*}
\int_0^T \langle \f m ; \f a \otimes \f a   \rangle \de t ={}&\lim _{n\ra \infty} \int_0^T \int_\Omega \left ( \f v^n \otimes \f v^n - \f v \otimes \f v \right ) : \f a \otimes \f a   \de \f x \de t \\\geq {}& \liminf _{n\ra \infty} \int_0^T \int_\Omega \left ( \f v^n \cdot \f a \right )^2  -\left ( \f v \cdot  \f a\right )^2   \de \f x \de t  \geq 0
\end{align*}
for all $\f a \in \C_c^\infty([0,T)) \otimes \C(\ov\Omega ; \R^d ) $. 

\textit{Solution set is convex.}
 Let $\f v_1$ and $\f v_2$ be two measure-valued solutions according to Definition~\ref{def:meas} with the measures $\f m_1$ and $\f m_2$ respectively. 
 A simple calculation shows
 \begin{multline*}
 ( \lambda \f v_1 + (1-\lambda) \f v_2) \otimes  ( \lambda \f v_1 + (1-\lambda) \f v_2) =\\ \lambda \left ( \f v_1 \otimes \f v_1 \right ) + ( 1-\lambda ) \left ( \f v _2 \otimes \f v_2\right ) - \lambda (1- \lambda ) \left ( ( \f v_1 - \f v _2) \otimes ( \f v _1 - \f v_2 ) \right  )\,.
 \end{multline*}
This implies that $\f v:=\lambda \f v _1 + (1-\lambda) \f v_2 $ fulfills the equation~\eqref{measeq} with the measure $$\f m:= \lambda \f m_1 + (1-\lambda ) \f m_2 + \lambda (1-\lambda)  \left ( ( \f v_1 - \f v _2) \otimes ( \f v _1 - \f v_2 ) \right  )\,,$$
which is again a positive definite matrix point wise a.e. in $\Omega \times (0,T)$. 
Similar observations imply that the energy inequality~\eqref{measeneq} also holds for $\f v$ and $\f m$. 

\textit{Solution set is closed.}
Let $\{\f v^n\}\subset L^\infty(0,T;\Ha)$ be a sequence of measure-valued solutions according to Definition~\ref{def:meas} with associated measures $ \{\f m^n\} \subset  L^\infty (0,T;\mathcal{M}(\ov \Omega ; \mathbb{M}^{d\times d}_{\sym,+}))$. Since the energy inequality~\eqref{measeneq} is fulfilled for every $n\in \N$, we may deduce that $ \{ \f v^n \} $ is bounded in $ L^\infty(0,T;\Ha)$ and $\{\f m^n\} $ is bounded in $  L^\infty (0,T;\mathcal{M}(\ov \Omega ; \mathbb{M}_{\sym,+}^{d\times d} ))$ independent of $n$. 
Note that we may estimate the right-hand side by $ \langle \f f , \f v \rangle \leq \| \f f \|_{L^2(\Omega)} \| \f v \| _{L^2(\Omega)} \leq \| \f f \|_{L^2(\Omega)} (1+\| \f v \| _{L^2(\Omega)} ^2)$ such that the boundedness follows by Gronwall's Lemma. 
This allows to infer the existence of a subsequence such that
\begin{align*}
\f v^n \stackrel{*}{\rightharpoonup} \f v  \quad \text{in } L^\infty(0,T;\Ha) \qquad \text{and} \qquad \f m^n \stackrel{*}{\rightharpoonup} \tilde{\f m} \quad \text{in }L^\infty (0,T;\mathcal{M}(\ov \Omega ; \mathbb{M}_{\sym,+}^{d\times d} ))\,.
\end{align*}
Since $\{ ( \f v^n \otimes \f v^n ) \} $ is also bounded in $L^\infty (0,T;\mathcal{M}(\ov \Omega ; \mathbb{M}_{\sym,+}^{d\times d} ))$, we may select another subsequence such that there exists a measure $ \bar{\f m} \in L^\infty (0,T;\mathcal{M}(\ov \Omega ; \mathbb{M}_{\sym,+}^{d\times d} ))$ with 
\begin{align*}
\int_0^T \langle \bar{\f m }; \f A   \rangle \de t = \lim _{n\ra \infty} \int_0^T \int_\Omega \left ( \f v^n \otimes \f v^n - \f v \otimes \f v \right ) : \f A  \de \f x \de t \,
\end{align*}
 for all $ \f A \in \C_c^\infty([0,T)) \otimes \C (\ov  \Omega ; \mathbb M^{d\times d})$. Defining $ \f m : = \bar{\f m} + \tilde{\f m} \in  L^\infty (0,T;\mathcal{M}(\ov \Omega ; \mathbb{M}_{\sym,+}^{d\times d} ))$, we observe that in the limit, the measure-valued formulation~\eqref{measeq} is fulfilled by $\f v $ and $\f m$. Similar, this can be observed for the energy inequality, where we may use Lemma~\ref{lem:invar} to pass to the limit in the energy inequality as an in-between-step. 

\textit{Well-posedness of maximal dissipative solutions.} 
 Since the solution set is closed and convex, the optimization problem of Definition~\ref{def:maxmeas} is well-defined and there exists a unique minimizer of the functional since it is strictly convex. 

\end{proof}

\begin{remark}
It would also be possible to prove continuous dependence in this case, at least in the weak$^*$ topology again with an adapted measure. 
Hence, it would also be possible to show the well-posedness of maximal dissipative measure-valued solutions in the case of the Navier--Stokes equations.

We refrained from defining maximal dissipative solution in the proposed measure valued sense for several reasons. First of all, the measure $\f m$ has way more degrees of freedom and seems to be some auxiliary variable, which is only needed to fulfill the equation in some very weak sense. But as explained above, the equation may not be the right relation to approximate. 
Secondly, the measure $\f m$ in the definition of measure-valued solutions does not appear in the minimization process of Definition~\ref{def:maxmeas} of maximal dissipative measure valued solutions. Thus, the measure may becomes arbitrarily large in this formulation.  On the other hand, if the measure $\f m$ is introduced into the objective function in Definition~\ref{def:maxmeas}, the solutions may not be unique since the associated admissible set is not convex anymore (there are also other selection criteria proposed, see~\cite{maxbreit}).
Thirdly, for more involved coupled systems, it was observed that natural numerical approximation rather converge to dissipative solutions in the sense of Definition~\ref{def:diss} instead of measure-valued solutions in the sense of Definition~\ref{def:meas} (see~\cite{approx} and~\cite{nematicelectro}). 
Dissipative solution also fit better into a general framework as in Section~\ref{sec:gen}

%
 
\end{remark}

\end{document}